\documentclass[11pt,a4paper]{article}
\usepackage[T1]{fontenc}
\usepackage{lmodern}
\usepackage{amsmath,amssymb,amsthm}
\usepackage[margin=1in]{geometry}
\usepackage{microtype}
\usepackage{enumitem}
\usepackage[hidelinks]{hyperref}
\usepackage[nameinlink,noabbrev]{cleveref}
\newcommand{\R}{\mathbb R}
\newcommand{\M}[2]{M_{#1,#2}(\R)}
\DeclareMathOperator{\interior}{int}
\DeclareMathOperator{\rank}{rank}
\DeclareMathOperator{\GL}{GL}
\DeclareMathOperator{\tr}{tr}

\newtheorem{theorem}{Theorem}[section]
\newtheorem{lemma}[theorem]{Lemma}
\newtheorem{proposition}[theorem]{Proposition}
\newtheorem{corollary}[theorem]{Corollary}
\theoremstyle{definition}
\newtheorem{definition}[theorem]{Definition}
\newtheorem{example}[theorem]{Example}
\theoremstyle{remark}
\newtheorem{remark}[theorem]{Remark}
\numberwithin{equation}{section}
\setlist[enumerate]{itemsep=4pt,topsep=5pt}
\setlist[itemize]{itemsep=3pt,topsep=5pt}
\makeatletter
\def\@cite#1#2{[{#1\if@tempswa ,~#2\fi}]}
\makeatother
\title{Linear Preservers of Semipositivity and Minimal Semipositivity}
\author{Yulong Tian\thanks{Corresponding author.}\quad and\quad Jinli Xu}
\date{}
\hypersetup{pdftitle={Linear Preservers of Semipositivity and Minimal Semipositivity},
  pdfauthor={Yulong Tian and Jinli Xu},
  pdfsubject={Semipositivity over proper cones and square minimal semipositivity over the nonnegative orthant}}

\begin{document}
\maketitle
\begin{abstract}
We classify all real linear into preservers of semipositivity on
$\M{m}{n}$, $m,n\ge2$, relative to arbitrary source and target pairs of
proper cones, without assuming invertibility or a prescribed form.
The classification consists of product maps $L(A)=XAY$ and rank-$n$
degenerate maps $L(A)=\Psi(u^TA)$, subject to explicit cone and
matrix-pencil conditions.
For square minimal semipositivity with respect to the standard
nonnegative orthant, preservation forces the operator to be
invertible: for $n\ge2$, the linear into preservers are precisely the
maps $L(A)=XAY$ and $L(A)=XA^TY$ with $X,Y\in\GL_n(\R)$ satisfying
$X^{-1},Y^{-1}\ge0$.
We also classify all linear preservers of orthant semipositivity
from $\M{2}{2}$ to $\M{1}{3}$ into three disjoint classes,
two of which consist of maps of rank three outside product form.
\end{abstract}

\noindent\textbf{2020 Mathematics Subject Classification.}
Primary 15A86; Secondary 15B48.

\medskip
\noindent\textbf{Keywords.} Semipositive matrices; minimally semipositive matrices;
linear preservers; proper~cones.

\section{Introduction and main results}

The linear preserver problem in matrix theory asks for the
characterization of linear maps between matrix
spaces that leave a given subset or property invariant; see the survey
by Li and Pierce~\cite{lipierce2001}.
Given a subset $\mathcal S$ of a matrix space $V$, one may ask for all
linear maps $L:V\to V$ with $L(\mathcal S)\subseteq\mathcal S$,
or for all those with $L(\mathcal S)=\mathcal S$.
The former maps are called \emph{into preservers}, while the latter
are called \emph{onto} or \emph{strong preservers}.
For many positivity classes of matrices (surveyed recently by Choudhury
and Yadav~\cite{choudhuryyadav2026}), strong preservers are by now well
understood. By contrast, characterizing into preservers is more
delicate because singular maps can arise; consequently, existing
classifications frequently rely on additional assumptions such as
invertibility or a prescribed form.

Recall that a real $m\times n$ matrix $A$ is \emph{semipositive}
if there exists a vector $x>0$ such that $Ax>0$, where the inequalities
$\ge0$ and $>0$ for vectors or matrices denote entrywise nonnegativity
and strict positivity, respectively. A semipositive matrix is
\emph{minimally semipositive} if no column can be deleted without losing
semipositivity. Semipositive matrices arise naturally in the theory of
nonnegative matrices and M-matrices~\cite{bermanplemmons1994}, and their
structural and geometric properties have been studied in
\cite{johnsonkerrstanford1994,tsatsomeros2016}.

For semipositivity on $\M{m}{n}$, into preservers of the product form
$A\mapsto XAY$ were characterized in
\cite{choudhurykannansivakumar2018,dorsey2016}.
Dorsey et al.~\cite{dorsey2016} further classified all onto preservers
and conjectured that every invertible into preserver has this product
form. Jayaraman and Mer proved this conjecture for $m\ge n\ge2$
\cite[Theorem~3.11]{jayaramanmer2021}, and Mer subsequently established
it for all $m,n\ge2$ \cite[Theorem~1.2]{mer2026}.
However, as observed in~\cite{dorsey2016}, noninvertible into preservers
need not have product form, leaving open the problem of classifying
all linear into preservers without an invertibility assumption.

More generally, semipositivity can be formulated relative to proper
cones by requiring a matrix to map some interior point of an input
cone into the interior of an output cone. Arumugasamy, Jayaraman, and
Mer~\cite{arumugasamy2018} developed this framework for linear maps
between finite-dimensional real Hilbert spaces. For $m\ge n\ge2$,
Jayaraman and Mer subsequently characterized product maps preserving
semipositivity with respect to a single fixed pair of proper cones
\cite[Theorem~3.29]{jayaramanmer2021}.

In this paper, we classify linear into preservers in two settings,
without assuming invertibility or a prescribed form.
First, for all $m,n\ge2$, we classify the linear maps on $\M{m}{n}$
that carry semipositive matrices relative to one pair of proper cones
into those relative to another pair. Second, for each $n\ge2$, we
classify all linear into preservers of minimally semipositive
$n\times n$ matrices with respect to the standard nonnegative orthant.

For positive integers $m,n$, let $\M{m}{n}$ denote the space of real
$m\times n$ matrices and $\GL_n(\R)$ the group of invertible
$n\times n$ matrices. Recall that a \emph{proper cone} in $\R^q$
is a closed convex cone $Q$ that is pointed ($Q\cap(-Q)=\{0\}$)
and has nonempty interior $\interior Q$; its dual cone is denoted by $Q^*$.

\begin{definition}\label{def:semipositive}
For proper cones $P\subseteq\R^n$ and $Q\subseteq\R^m$, the set of
\emph{$(P,Q)$-semipositive} matrices is defined by
\[
 S(P,Q)=\{A\in\M{m}{n}:Ax\in\interior Q
                 \text{ for some }x\in\interior P\}.
\]
\end{definition}

Our first main result shows that, for $m,n\ge2$, every linear into
preserver between proper-cone semipositivity classes in $\M{m}{n}$
is a product map $L(A)=XAY$ or a rank-$n$ degenerate map
$L(A)=\Psi(u^TA)$.
We express the cone condition for the degenerate form in terms of the
following \emph{matrix pencil}.
For a linear map $\Psi:\M{1}{n}\to\M{m}{n}$ and each $d\in\R^m$,
the map $r\mapsto d^T\Psi(r)$ is a linear endomorphism of $\M{1}{n}$.
Hence it is right multiplication by a unique matrix
$\mathcal Y_\Psi(d)\in\M{n}{n}$:
\begin{equation}\label{eq:pencil-definition}
 d^T\Psi(r)=r\mathcal Y_\Psi(d)
 \qquad(d\in\R^m,\ r\in\M{1}{n}).
\end{equation}
The matrix $\mathcal Y_\Psi(d)$ depends linearly on $d$; we write
$\mathcal Y(d)$ when $\Psi$ is fixed.

\begin{theorem}\label{thm:classification}
Let $m,n\ge2$ and let $K,H\subseteq\R^n$ and
$C,D\subseteq\R^m$ be proper cones.
A real linear map $L:\M{m}{n}\to\M{m}{n}$ satisfies
\begin{equation}\label{eq:preservation}
 L\bigl(S(K,C)\bigr)\subseteq S(H,D)
\end{equation}
if and only if it belongs to at least one of the following two classes.
\begin{enumerate}[label=\textup{(\Roman*)},ref=\Roman*]
\item There are $X\in\M{m}{m}$ and
$Y\in\GL_n(\R)$ such that
\begin{equation}\label{eq:product-class}
 L(A)=XAY\quad\text{for every }A,\qquad
 X(\interior C)\subseteq\interior D,\qquad Y^{-1}K\subseteq H.
\end{equation}
\item There are $0\ne u\in C^*$ and
a linear map $\Psi:\M{1}{n}\to\M{m}{n}$ such that
\begin{equation}\label{eq:degenerate-class}
 L(A)=\Psi(u^TA)\quad\text{for every }A,
\end{equation}
and the pencil $\mathcal Y(d)=\mathcal Y_\Psi(d)$ from
\eqref{eq:pencil-definition} satisfies
\begin{equation}\label{eq:full-dual-condition}
 \mathcal Y(d)\in\GL_n(\R),\qquad \mathcal Y(d)^{-1}K\subseteq H
 \qquad\text{for every }0\ne d\in D^*.
\end{equation}
\end{enumerate}
The two classes have the following rank and intersection properties.
\begin{enumerate}[label=\textup{(\roman*)}]
\item In \textup{(I)}, $\rank L=n\rank X$; in \textup{(II)}, $\Psi$ is
injective and $\rank L=n$.
\item The intersection of the two classes consists of those maps in
\textup{(I)} with $\rank X=1$.
\end{enumerate}
\end{theorem}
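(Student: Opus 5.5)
We first prove sufficiency. For (I), let $A\in S(K,C)$ with $x\in\interior K$ and $Ax\in\interior C$. Put $z=Y^{-1}x$. Since $Y^{-1}$ is invertible and $Y^{-1}K\subseteq H$, it maps $\interior K$ into $\interior H$, so $z\in\interior H$. Then $XAYz=XAx\in X(\interior C)\subseteq\interior D$.

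For (II), the argument is by duality. A matrix $B$ lies in $S(H,D)$ iff no $0\ne d\in D^*$ has $d^TB\in -H^*$. This is the Gordan/Ville-type alternative for proper cones; it follows by separating the convex set $B(\interior H)$ from $\interior D$. Let $A\in S(K,C)$ and put $r=u^TA$. Since $0\ne u\in C^*$ and $Ax\in\interior C$, we get $rx=u^T(Ax)>0$ for some $x\in\interior K$, so $r\notin -K^*$. Suppose $d^T\Psi(r)=r\mathcal Y(d)\in -H^*$ for some $0\ne d\in D^*$. The condition $\mathcal Y(d)^{-1}K\subseteq H$ gives $\mathcal Y(d)^{-T}H^*\subseteq K^*$, and hence $r\in -K^*$, a contradiction. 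The rank statements are routine. In (II), injectivity of $\Psi$ follows from invertibility of $\mathcal Y(d)$ for a single $d$, and $u\ne0$ makes $A\mapsto u^TA$ onto $\M1n$. The intersection statement follows by writing $X=vu^T$. Then $L(A)=v\,u^TAY$, so $\mathcal Y(d)=(d^Tv)Y$. The condition $X(\interior C)\subseteq\interior D$ means $v\in\interior D$ and $u\in C^*\setminus\{0\}$, which gives $d^Tv>0$ on $D^*\setminus\{0\}$. Conversely, if a map in (II) has product form, then $\rank L=n$ forces $\rank X=1$.

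Necessity is the main work. We use the same dual description of the complement of $S(K,C)$: it is the set of $A$ with $c^TA\in -K^*$ for some $0\ne c\in C^*$. The plan is to exploit the fact that $S(K,C)$ is an open set whose complement is a union of "rank-one slices". The key intermediate claim is that the adjoint $L^*$ (with respect to the trace form) maps each dual-extremal set $\{\,d\,w^T: d\in D^*,\ w\in H^*\,\}$ appropriately. Concretely, $L$ preserves $S$ iff its contrapositive holds on the dual side. The natural tool is a linear map sending rank-one cone matrices $d w^T$ to rank-one matrices $c\,k^T$ with $c\in C^*$, $k\in K^*$. We would first show that $L(S(K,C))\subseteq S(H,D)$ forces, for each $0\ne d\in D^*$, the linear functional $A\mapsto d^TL(A)$ (valued in $\M1n$) to vanish or to factor as $A\mapsto (c^TA)Z_d$. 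Here $c=c(d)\in C^*$ and $Z_d\in\GL_n$ satisfies $Z_d^{-1}K\subseteq H$. The proof goes by considering the linear map $\ell_d:A\mapsto d^TL(A)$ and showing that its kernel must contain $\{A: c^TA=0\}$. If it did not, we could find $A\in S(K,C)$ with $\ell_d(A)$ arbitrarily negative on $\interior H$. To do this, perturb within the large open set $S(K,C)$, which contains all matrices with a prescribed positive column combination; then use $m,n\ge2$ to move freely.

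Once this local factorization $\ell_d(A)=(c(d)^TA)Z_d$ is established for all $d$ in the interior of $D^*$, the step I expect to be hardest is the global gluing of $c(d)$. Either $c(d)$ is constant up to scaling, giving (II) with $u=c$ and $\mathcal Y(d)=Z_d$, or $c(d)$ varies. In the latter case, linearity of $d\mapsto\ell_d$ forces $c(d)=X^Td$ and $Z_d=Y$ independent of $d$, by a standard rank-one-sum argument: $\ell_{d_1+d_2}$ is again of the factored form, and $n\ge2$ rules out mixed cases. This gives (I). The remaining conditions are then read off by extending from $\interior D^*$ to $D^*\setminus\{0\}$ via closedness, with care for boundary degenerations. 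Invertibility of $Y$ and $\mathcal Y(d)$ on the boundary is shown via the requirement that $S(K,C)$ maps into nonempty $S(H,D)$.
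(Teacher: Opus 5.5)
Your sufficiency arguments for (I) and (II) and the rank bookkeeping are correct. The intersection step also works once you allow replacing $(v,u)$ by $(-v,-u)$: for $X=vu^T$, the inclusion $X(\interior C)\subseteq\interior D$ only forces $u^Ta$ to have constant sign on the connected set $\interior C$.

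The genuine gap is in necessity, at the step that carries the whole theorem: the claim that each row map $\ell_d(A)=d^TL(A)$, $0\ne d\in D^*$, factors as $(c^TA)Z_d$. Your justification (``otherwise one could perturb inside $S(K,C)$ to make $\ell_d(A)$ negative'') does not say where $c$ comes from. It also never uses the one fact that makes the claim true: $\ell_d$ takes values in the $n$-dimensional space $\M{1}{n}$, so $W=\ker\ell_d$ has codimension at most $n$.

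Without this fact the claim is false. Classes (II) and (III) of Theorem~\ref{thm:additional-column} are maps $\M{2}{2}\to\M{1}{3}$ with $T(\mathcal S_{2,2})\subseteq\mathcal S_{1,3}$ whose kernels are one-dimensional. Those kernels contain no subspace $\{A:c^TA=0\}$, and an argument that only perturbs inside $S(K,C)$ would apply to them equally. (Note also that $\ell_d$ can never vanish, since $0\notin\mathcal R_H$.)

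What you need is the dimension count of Lemma~\ref{lem:row-rigidity}:
\begin{itemize}
\item Since $W$ avoids $S(K,C)$, for each $x\in\interior K$ the subspace $Wx$ misses $\interior C$. Separation gives $0\ne u_x\in C^*$ with $u_xx^T\in W^\perp$.
\item For a basis $x_1,\dots,x_n$ of $\R^n$ inside $\interior K$, the matrices $u_{x_i}x_i^T$ are linearly independent. Because $\dim W^\perp\le n$, they form a basis of $W^\perp$.
\item Expanding $u_{x_*}x_*^T$, with $x_*=\sum_ix_i$, in this basis forces all $u_{x_i}$ to be collinear.
\end{itemize}
Thus $W=\{A:u^TA=0\}$. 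Lemma~\ref{lem:row-criterion} then yields $\ell_d(A)=u^TAZ_d$ with $Z_d\in\GL_n(\R)$ and $Z_d^{-1}K\subseteq H$.

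Your gluing step is the rank-one-sum argument of Proposition~\ref{prop:algebraic-dichotomy}, and it needs no hypothesis on $n$. The endgame, however, should not go through ``extending from $\interior D^*$ by closedness''. Invertibility of $\mathcal Y(d)$ is not a closed condition, so a limit cannot deliver it at boundary points. Instead, Lemma~\ref{lem:alternative} gives $\ell_d(S(K,C))\subseteq\mathcal R_H$ for every $0\ne d\in D^*$, boundary included. Once the global form is known, apply the row criterion at each such $d$ directly.

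In the product case you also omit the sign step:
\begin{itemize}
\item $Z_d=\lambda_dY$ with $\lambda_d\ne0$.
\item All $\lambda_d$ have one sign, since otherwise $Y^{-1}K$ and $-Y^{-1}K$ would both lie in the pointed cone $H$.
\item Only after a simultaneous sign change of $(X,Y)$ do you get $X^Td\in C^*\setminus\{0\}$ for all $0\ne d\in D^*$, i.e.\ $X(\interior C)\subseteq\interior D$.
\end{itemize}

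Finally, your ``key intermediate claim'' about $L^*$ is false as stated. Take orthants and $L(A)=AY$ with $Y=\begin{pmatrix}1&-1\\0&1\end{pmatrix}$, which is a preserver since $Y^{-1}\ge0$. Then $L^*(e_1e_2^T)=e_1(Ye_2)^T$ has a negative entry, so it is not of the form $ck^T$ with $c\in C^*$, $k\in K^*$.
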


We next consider minimal semipositivity in $\M{n}{n}$, $n\ge2$,
with respect to the standard nonnegative orthant.
A square matrix is minimally semipositive if and only if it is
invertible with a nonnegative inverse
\cite{choudhurykannansivakumar2018}.
Thus the class under consideration is
\[
 \mathcal I_n=\{A\in\GL_n(\R):A^{-1}\ge0\}.
\]
For maps of the prescribed forms $A\mapsto XAY$ and
$A\mapsto XA^TY$, the necessary and sufficient conditions on $X$
and $Y$ are known
\cite[Theorems~2.10 and~2.11]{choudhurykannansivakumar2018}.
Our second main result shows that every linear into preserver of
$\mathcal I_n$ is automatically invertible and has one of these two
standard forms.

\begin{theorem}\label{thm:msp-classification}
Let $n\ge2$ and let $L:\M{n}{n}\to\M{n}{n}$ be a real linear map.
Then $L(\mathcal I_n)\subseteq\mathcal I_n$ if and only if there exist
$X,Y\in\GL_n(\R)$, with $X^{-1},Y^{-1}\ge0$, such that either
\[
 L(A)=XAY\quad\text{for every }A\in\M{n}{n},
\]
or
\[
 L(A)=XA^TY\quad\text{for every }A\in\M{n}{n}.
\]
In particular, every such into preserver is invertible.
\end{theorem}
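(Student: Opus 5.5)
The sufficiency direction is immediate. If $X^{-1},Y^{-1}\ge0$ and $A^{-1}\ge0$, then $(XAY)^{-1}=Y^{-1}A^{-1}X^{-1}\ge0$ and $(XA^TY)^{-1}=Y^{-1}(A^{-1})^TX^{-1}\ge0$. Both maps are invertible on $\M{n}{n}$. All the work is therefore in necessity.

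The plan is to reduce to a known characterization of linear maps preserving invertibility, and for this the first step is to exploit the size of $\mathcal I_n$. The set $\mathcal I_n$ contains every matrix $A$ with $A^{-1}>0$, which is an open subset of $\GL_n(\R)$. It also contains $I+N$ for every $N\le 0$ small, the permutation matrices, and positive diagonal matrices. Since $\mathcal I_n$ is closed under products, it contains the open cone $U=\{A: A^{-1}>0\}$; in fact $U$ already suffices. Its linear span is all of $\M{n}{n}$. The hypothesis gives $\det L(A)\ne0$ for all $A\in U$.

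Fix $B\in U$, so $B$ is invertible, and consider $p(A)=\det L(A)$ together with $q(A)=\det A$. I would like to prove that $L$ maps singular matrices to singular matrices, so that Dieudonné's theorem forces $L(A)=XAY$ or $XA^TY$ with $X,Y$ invertible. The natural route is to show that $p$ has constant sign on the connected component of $\{q\ne 0\}$ reached via $\mathcal I_n$, and then use boundary behaviour. More concretely, I would use the following step. Take $A\in\mathcal I_n$ with $A^{-1}>0$ and a singular matrix $S$ on the boundary, approached as $A+tE$ with $E\le0$ of rank one. Here $A+tE$ stays in $\mathcal I_n$ as long as it is invertible, because $(A+tE)^{-1}=A^{-1}+\frac{t A^{-1}E A^{-1}}{1-t\,\cdot}$ stays nonnegative until the blow-up. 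As the singularity is approached, the inverse $L(A+tE)^{-1}$ would need to remain nonnegative. This alone does not force blow-up, so instead I would use the entrywise rank-one updates $A+t e_ie_j^T$, which are in $\mathcal I_n$ for all $t$ in an interval ending at the singular point. By Sherman–Morrison, beyond that point the inverse acquires negative entries.

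A cleaner alternative, which I would try first, is to use that $\mathcal I_n$ contains $A\mapsto$ entrywise nonpositive perturbations. For $A\in U$ and any $N\ge0$, $A(I+sN)^{-1}$ and related families behave well. The key identity is that $B\in\mathcal I_n$ and $B-tN\in \mathcal I_n$ for all $t\ge0$ when $B$ is a nonnegative-inverse M-type matrix and $N\ge0$ in a suitable pattern. This yields that $L(B)-tL(N)$ is invertible for all $t\ge0$, so $L(B)^{-1}L(N)$ has no positive eigenvalue. Running over enough pairs and using linearity to include $t<0$ via other elements, one aims for: $L(B)^{-1}L(N)$ has no real nonzero eigenvalue, whence $\det(L(B)-tL(N))$ is a polynomial with controlled roots.

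The main obstacle is exactly this step of upgrading ``$L$ preserves nonsingularity on the open set $U$'' to ``$L$ preserves singularity'', or equivalently to $\det L(A)=c\det A$. My first attempt is the polynomial argument. On each line $A+tE$ through $U$, the zeros of $\det L(A+tE)$ must avoid the intervals where $A+tE\in\mathcal I_n$, and these intervals end exactly at zeros of $\det(A+tE)$. Choosing $E=e_ie_j^T$ makes $\det(A+tE)$ affine, and nonnegativity of the inverse must fail at the pole. Since $L(A+tE)^{-1}$ is entrywise nonnegative on that interval, its entries stay bounded only if they stay bounded from both sides; I would show that they tend to $+\infty$, forcing $\det L$ to vanish at the same point. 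Once $\det L(A)=0\Leftarrow\det A=0$ holds on a dense set of singular matrices, Dieudonné's theorem yields $L(A)=XAY$ or $XA^TY$ with $X,Y\in\GL_n(\R)$. Applying the map to $A=I$ and to permutation and positive diagonal matrices then gives $Y^{-1}A^{-1}X^{-1}\ge0$ for all $A^{-1}\ge0$. Taking $A^{-1}=e_ie_j^T+\varepsilon I$ and letting $\varepsilon\to0$ shows $Y^{-1}e_ie_j^TX^{-1}\ge0$ for all $i,j$, so the columns of $Y^{-1}$ and the rows of $X^{-1}$ have constant signs. After absorbing a sign $\pm1$ into $X$ and $Y$, this gives $X^{-1},Y^{-1}\ge0$.
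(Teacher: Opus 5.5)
Your sufficiency argument and your closing sign analysis are correct. Testing $A^{-1}=e_ie_j^T+\varepsilon I$ gives $(Y^{-1}e_i)(e_j^TX^{-1})\ge0$, hence one common sign for all columns of $Y^{-1}$ and all rows of $X^{-1}$. This is a fine self-contained substitute for the factor criteria the paper cites.

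\textbf{First gap.} You never prove that $\det L$ vanishes on singular matrices, which is the step you yourself call the main obstacle. Your eigenvalue alternative via $L(B)^{-1}L(N)$ is stated only as an aim. Your proposed mechanism, that the entries of $L(A+te_ie_j^T)^{-1}$ tend to $+\infty$ as $t$ approaches the pole $t_0=-1/(A^{-1})_{ji}$, is asserted without argument. It also cannot come from one-sided information. Put $S=A+t_0e_ie_j^T$. Knowing $L(A+te_ie_j^T)\in\mathcal I_n$ for $t\in(t_0,0]$ only gives $L(S)^{-1}\ge0$ when $L(S)$ is invertible, and that is no contradiction.

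The missing idea is a two-sided approach exploiting $L(-M)=-L(M)$. Your Sherman--Morrison remark contains it if you push it further. With $b=(A^{-1})_{ji}$ one has $(A+te_ie_j^T)^{-1}=A^{-1}-\frac{t}{1+tb}A^{-1}e_ie_j^TA^{-1}$. For $t<t_0$ near $t_0$ the coefficient $\frac{t}{1+tb}$ tends to $+\infty$, while $A^{-1}e_ie_j^TA^{-1}>0$. So the inverse becomes \emph{entrywise negative}, not merely ``acquires negative entries'', and hence $-(A+te_ie_j^T)\in\mathcal I_n$. Therefore $L(A+te_ie_j^T)^{-1}$ is $\ge0$ on one side of $t_0$ and $\le0$ on the other. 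If $L(S)$ were invertible, this would force $L(S)^{-1}=0$. The paper runs exactly this argument, in the form $S+tE,\,-S+tE\in\mathcal I_n$ for small $t>0$ with $E=e_ne_n^T$. It does so for every rank-$(n-1)$ matrix $S$ with positive left and right null vectors.

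\textbf{Second gap.} Even granting that step, your passage to Dieudonn\'e's theorem through ``a dense set of singular matrices'' does not work. The matrices $S$ reached this way have positive left and right null vectors, namely $e_j^TA^{-1}$ and $A^{-1}e_i$. Such matrices are not dense in $\{\det=0\}$. For $n=2$, every singular matrix near the all-ones matrix has both null vectors close to multiples of $(1,-1)^T$. So continuity does not show that $L$ maps all singular matrices to singular ones.

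What you need is an algebraic extension. The function $\det L(A)$ is a polynomial of degree $n$ that vanishes on a nonempty relatively open piece of the irreducible hypersurface $\det A=0$. Hence it is divisible by $\det A$, so $\det L(A)=c\det A$ with $c=\det L(I)\ne0$. This identity also supplies the two-sided nonsingularity preservation that the structure theorem requires. That extension is precisely the content of the paper's Lemma~\ref{lem:msp-polynomial}. It is proved there via an explicit parametrization of these matrices and primality of the generic determinant.
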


The conclusion of Theorem~\ref{thm:classification} that every
preserver of rank greater than $n$ has product form does
not extend in general to maps between matrix spaces of different sizes.
For orthant semipositivity, Theorem~\ref{thm:additional-column}
classifies all linear preservers from $\M{2}{2}$ to $\M{1}{3}$ into
three disjoint classes. Two of these classes consist of maps of
rank three that lie outside product form.

The paper is organized as follows.
Sections~\ref{sec:cone-geometry}--\ref{sec:classification-proof}
establish the proper-cone classification in Theorem~\ref{thm:classification}
and its invertible case. Section~\ref{sec:classification-proof} also gives
an example showing that the pencil condition cannot in general be
checked only on extreme rays.
Section~\ref{sec:examples} gives the orthant specialization and the
classification from $\M{2}{2}$ to $\M{1}{3}$.
Section~\ref{sec:minimal-semipositivity} proves the classification of
linear into preservers of square minimally semipositive matrices.

\section{Cone geometry and a strict alternative}
\label{sec:cone-geometry}

Throughout Sections~\ref{sec:cone-geometry}--\ref{sec:classification-proof},
the dimensions and cones are as in Theorem~\ref{thm:classification}.
In these sections, a linear map $L:\M{m}{n}\to\M{m}{n}$ is said to
\emph{preserve semipositivity} if it satisfies \eqref{eq:preservation}.
We collect the separation and duality facts needed below and provide
short, self-contained proofs for some of them.
We use the Euclidean inner product $\langle x,y\rangle=x^Ty$.
For a subspace $V\subseteq\R^q$, write
\[
 V^\perp=\{x\in\R^q:x^Tv=0\text{ for every }v\in V\}.
\]
For a cone $Q\subseteq\R^q$, its dual cone is
\[
 Q^*=\{v\in\R^q:v^Tz\ge0\text{ for every }z\in Q\}.
\]

\begin{lemma}
\label{lem:cone-facts}
Let $Q\subseteq\R^q$ be a proper cone, with $q\ge1$. Then
\[
 0\notin\interior Q,\qquad
 \interior Q\cap(-\interior Q)=\varnothing,\qquad
 Q+\interior Q\subseteq\interior Q,\qquad
 \overline{\interior Q}=Q.
\]
The interior of $Q$ is convex and connected. Moreover, $Q^*$ is a proper
cone, and
\begin{equation}
 v^Ta>0\qquad
 (0\neq v\in Q^*,\ a\in\interior Q).
 \label{eq:strict-dual-positivity}
\end{equation}
Both $\interior Q$ and $\interior Q^*$ contain a basis of $\R^q$.
\end{lemma}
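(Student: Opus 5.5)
We prove the assertions of Lemma~\ref{lem:cone-facts} in order, using only that $Q$ is a closed convex cone with $Q\cap(-Q)=\{0\}$ and $\interior Q\neq\varnothing$, together with the standard separation theorem for closed convex sets in $\R^q$.

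\textbf{Step 1: the open-set properties.} If $0\in\interior Q$, then a ball around $0$ lies in $Q$, so $Q=\R^q$ because $Q$ is a cone. This contradicts pointedness, since $q\ge1$. If $a\in\interior Q\cap(-\interior Q)$, then $a,-a\in Q$, so $a=0\in\interior Q$, which is impossible. For $z\in Q$ and $a\in\interior Q$, choose a ball $B(a,\varepsilon)\subseteq Q$. Then $B(z+a,\varepsilon)=z+B(a,\varepsilon)\subseteq Q+Q\subseteq Q$, so $z+a\in\interior Q$. Convexity of $\interior Q$ is standard: if $a,b\in\interior Q$ and $t\in[0,1]$, then $ta+(1-t)b+B(0,\varepsilon)\subseteq Q$ for small $\varepsilon$. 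Connectedness follows from convexity. For density, fix $a\in\interior Q$ and $z\in Q$. By the previous property, $z+ta\in\interior Q$ for every $t>0$, and $z+ta\to z$ as $t\to0$. Closedness of $Q$ then gives $\overline{\interior Q}=Q$.

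\textbf{Step 2: duality.} It is routine that $Q^*$ is a closed convex cone.
\begin{itemize}
\item For strict positivity, take $0\neq v\in Q^*$ and $a\in\interior Q$ with $B(a,\varepsilon)\subseteq Q$. Then $a-\varepsilon v/\|v\|\in Q$, so $v^Ta\ge\varepsilon\|v\|>0$, which is \eqref{eq:strict-dual-positivity}.
\item For pointedness, if $v,-v\in Q^*$, then $v^Tz=0$ for all $z\in Q$. Since $Q$ has nonempty interior, it spans $\R^q$, and hence $v=0$.
\item For nonempty interior of $Q^*$, we show $Q^*$ spans $\R^q$. Suppose instead that $w\neq0$ satisfies $w^Tv=0$ for all $v\in Q^*$. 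Then $w$ and $-w$ both lie in $Q^{**}$. Since $Q^{**}=Q$ for closed convex cones, this contradicts pointedness of $Q$.
\end{itemize}
A convex cone that spans $\R^q$ has nonempty interior: the open positive hull of a basis contained in it is an open subset. This completes properness of $Q^*$.

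The one nontrivial ingredient is the identity $Q^{**}=Q$, which I would prove by separation. Clearly $Q\subseteq Q^{**}$. If $z\notin Q$, separate $z$ strictly from the closed convex set $Q$: there is $w$ with $w^Tz<\inf_{y\in Q}w^Ty$. The infimum over the cone $Q$ must equal $0$, since otherwise scaling would send it to $-\infty$. Hence $w\in Q^*$ and $w^Tz<0$, so $z\notin Q^{**}$.

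\textbf{Step 3: bases in the interior.} Let $a\in\interior Q$ with $B(a,\varepsilon)\subseteq\interior Q$. Then the vectors $a+\tfrac{\varepsilon}{2}e_i$, $i=1,\dots,q$, all lie in $\interior Q$. They are linearly independent once $\varepsilon$ is also small enough that the matrix $a\mathbf 1^T+\tfrac{\varepsilon}{2}I$ is invertible. By the matrix determinant lemma, this fails only when $\tfrac{\varepsilon}{2}=-\mathbf 1^Ta$, so it excludes at most one value of $\varepsilon$. Applying the same argument to the proper cone $Q^*$ gives a basis in $\interior Q^*$.

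The main obstacle is the identity $Q^{**}=Q$ in Step 2; everything else is elementary.
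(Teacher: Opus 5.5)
Your proof is correct. It differs from the paper's mainly in being self-contained. The paper settles everything beyond the first two assertions by citation: Rockafellar (Theorems 6.1--6.3) for the interior properties, and Berman--Plemmons for properness of $Q^*$ and for strict positivity. The paper obtains strict positivity by applying the dual-interior characterization (2.9) to $Q^*$ together with $Q^{**}=Q$.

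You instead prove strict positivity by a direct ball argument, which shows that the bipolar theorem is not needed there. In your route the bipolar theorem enters only once, to get $\interior Q^*\neq\varnothing$, and you prove it yourself from strict point--set separation.

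For the bases, the paper uses a one-line observation: a proper subspace contains no open ball, so any nonempty open set spans $\R^q$ and hence contains a basis. You already use this fact for pointedness of $Q^*$. Your determinant-lemma perturbation in Step 3 is therefore correct but more than needed.

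One small slip: with $B(a,\varepsilon)$ open, the point $a-\varepsilon v/\|v\|$ lies on its boundary, not inside it. Use the closed ball, which lies in $Q$ because $Q$ is closed, or replace $\varepsilon$ by $\varepsilon/2$. Also, your density step tacitly uses $ta\in\interior Q$ for $t>0$, which is immediate from the cone property.
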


\begin{proof}
The first two assertions follow from $Q\cap(-Q)=\{0\}$.
Since $Q$ has nonempty interior, the remaining interior properties follow
from the cone property and \cite[Theorems~6.1--6.3]{rockafellar1970}.
The bipolar and dual-interior results in
\cite[Chapter~1, Exercises~2.3 and~2.8]{bermanplemmons1994}
show that $Q^*$ is proper. Applying
\cite[Chapter~1, (2.9)]{bermanplemmons1994} to $Q^*$,
with $Q^{**}=Q$, gives
\eqref{eq:strict-dual-positivity}.
Finally, every nonempty open subset of $\R^q$ contains a basis, since a
proper linear subspace contains no open ball.
\end{proof}

\begin{lemma}[{cf.~\cite[Theorem~11.2]{rockafellar1970}}]
\label{lem:open-cone}
Let $U$ be a nonempty open convex subset of a finite-dimensional Euclidean
space $E$, with $0\notin U$. Then there is $0\neq v\in E$ such that
$\langle v,u\rangle\geq0$ for every $u\in U$.
\end{lemma}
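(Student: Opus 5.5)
The plan is to derive Lemma~\ref{lem:open-cone} from the standard separation theorem for a nonempty convex set and a point outside it. Since $U$ is nonempty, open and convex and $0\notin U$, the relative interior of $U$ is all of $U$, and it does not meet the relative interior of the convex set $\{0\}$. By \cite[Theorem~11.2]{rockafellar1970}, or equivalently by proper separation of two convex sets with disjoint relative interiors \cite[Theorem~11.3]{rockafellar1970}, there is a hyperplane through $0$ containing no point of $U$. Such a hyperplane has the form $\{x:\langle w,x\rangle=0\}$ with $w\neq0$.

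The main step is to upgrade ``$U$ misses the hyperplane'' to ``$U$ lies on one side of it''. Let $f(u)=\langle w,u\rangle$. This function is continuous and never vanishes on $U$. Because $U$ is convex, it is connected, so $f(U)$ is a connected subset of $\R\setminus\{0\}$. Hence $f$ has constant sign on $U$. Setting $v=w$ or $v=-w$ accordingly gives $\langle v,u\rangle>0$ for all $u\in U$, which is stronger than the required nonnegativity.

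For a self-contained alternative that avoids the relative-interior language, I would argue directly in finite dimensions. Pick $a\in U$ and consider the cone $\mathcal C=\{tu:t>0,\ u\in U\}$. It is open and convex, and it omits $0$: if $tu+su'=0$ with $u,u'\in U$, then $0$ would be a convex combination of points of $U$. Then $\overline{\mathcal C}$ is a closed convex cone. It is not all of $E$, since otherwise $-a\in\interior\overline{\mathcal C}=\mathcal C$, and convexity would put $0\in\mathcal C$. By the bipolar theorem, $\overline{\mathcal C}^*\neq\{0\}$. Any nonzero $v$ in this dual cone then satisfies $\langle v,u\rangle\ge0$ on $U$.

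The only delicate point is making sure the separating functional is nonzero and passes through the origin. The cone construction handles both: homogeneity forces the level to be $0$, and properness of $\overline{\mathcal C}$ forces $v\neq0$. I expect no real obstacle beyond citing the separation or bipolar result correctly.
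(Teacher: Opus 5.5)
Your proposal is correct and matches the paper, which gives no argument beyond citing Rockafellar's Theorem~11.2 with the affine set $M=\{0\}$. That theorem already places $U$ in one of the open half-spaces, so your connectedness step is redundant but harmless, and your cone-and-bipolar alternative is also valid and makes the lemma self-contained.
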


\begin{lemma}[{cf.~\cite[Theorem~11.2]{rockafellar1970}}]
\label{lem:annihilator}
Let $Q\subseteq\R^q$ be a proper cone and $V\subseteq\R^q$ a linear
subspace. If $V\cap\interior Q=\varnothing$, then
\[
 Q^*\cap V^\perp\neq\{0\}.
\]
\end{lemma}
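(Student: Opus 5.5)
We will derive Lemma~\ref{lem:annihilator} from Lemma~\ref{lem:open-cone}, applied to a suitable open convex set built from $\interior Q$ and $V$. Let $U=\interior Q+V=\{a+v:a\in\interior Q,\ v\in V\}$.

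\emph{Properties of $U$.} By Lemma~\ref{lem:cone-facts}, $\interior Q$ is nonempty, open and convex. Hence $U$ is nonempty, convex as the sum of two convex sets, and open as a union of translates of the open set $\interior Q$. It does not contain $0$: if $0=a+v$ with $a\in\interior Q$ and $v\in V$, then $a=-v\in V$, so $a\in V\cap\interior Q$, contradicting the hypothesis.

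\emph{Separation.} Lemma~\ref{lem:open-cone} in $E=\R^q$ then gives $0\ne w\in\R^q$ with $w^T(a+v)\ge0$ for all $a\in\interior Q$ and $v\in V$. We extract two facts from this inequality.

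\begin{enumerate}
\item \emph{$w\in V^\perp$.} Fix $a\in\interior Q$. Replacing $v$ by $tv$ gives $w^Ta+t\,w^Tv\ge0$ for every $t\in\R$. This forces $w^Tv=0$ for each $v\in V$.
\item \emph{$w\in Q^*$.} Taking $v=0$ gives $w^Ta\ge0$ for all $a\in\interior Q$. Since $\overline{\interior Q}=Q$ by Lemma~\ref{lem:cone-facts} and $z\mapsto w^Tz$ is continuous, we get $w^Tz\ge0$ for every $z\in Q$.
\end{enumerate}

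Thus $0\ne w\in Q^*\cap V^\perp$, which proves the lemma.

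The only step needing care is the linearity argument forcing $w^Tv=0$. It relies on $V$ being a subspace and not merely a cone, which is what lets $t$ range over all of $\R$. Everything else follows directly from the facts collected in Lemma~\ref{lem:cone-facts}.
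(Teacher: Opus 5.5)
Your proof is correct and follows the paper's argument. The paper separates $0$ from $\interior Q - V$, which equals your $\interior Q + V$ because $V=-V$. It then obtains $w\in V^\perp$ by scaling the $V$-component, and $w\in Q^*$ by setting that component to zero and using $\overline{\interior Q}=Q$.
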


\begin{proof}
The set $U=\interior Q-V$ is nonempty, open, and convex, and the
hypothesis gives $0\notin U$. By Lemma~\ref{lem:open-cone}, there is
$0\ne v\in\R^q$ such that
\[
 v^T(a-x)\ge0
 \qquad(a\in\interior Q,\ x\in V).
\]
For fixed $a\in\interior Q$ and $x\in V$, replacing $x$ by $tx$,
$t\in\R$, shows that $v^Tx=0$. Hence $v\in V^\perp$.
Taking $x=0$ and using $Q=\overline{\interior Q}$ from
Lemma~\ref{lem:cone-facts} gives $v\in Q^*$.
Thus $0\ne v\in Q^*\cap V^\perp$.
\end{proof}

\begin{lemma}[{\cite[Theorems~2.7 and~2.8]{cain1997}}]
\label{lem:alternative}
For $B\in\M{m}{n}$,
\begin{equation}
 B\notin S(H,D)
 \quad\Longleftrightarrow\quad
 \text{there exists }0\neq d\in D^*
 \text{ such that }B^Td\in-H^*.
 \label{eq:strict-alternative}
\end{equation}
\end{lemma}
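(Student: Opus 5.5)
We prove the two implications of \eqref{eq:strict-alternative} separately. The main tool is a separation argument in the style of Lemma~\ref{lem:annihilator}, applied in the product space $\R^m\times\R^n$.

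\emph{($\Leftarrow$).} Suppose $0\ne d\in D^*$ and $B^Td\in-H^*$, and assume for contradiction that $Bx\in\interior D$ for some $x\in\interior H$. By \eqref{eq:strict-dual-positivity} applied to the proper cone $D$, we get $d^TBx>0$. On the other hand, $d^TBx=(B^Td)^Tx$. Since $-B^Td\in H^*$ and $x\in H$, this quantity is $\le0$. This contradiction shows $B\notin S(H,D)$.

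\emph{($\Rightarrow$).} Suppose $B\notin S(H,D)$. Consider the set
\[
 U=\{Bx-y:\ x\in\interior H,\ y\in\interior D\}\subseteq\R^m .
\]
The set $U$ is convex, since $\interior H$ and $\interior D$ are convex by Lemma~\ref{lem:cone-facts}. It is open, being a union of translates of the open set $-\interior D$. It is nonempty. Moreover $0\notin U$, precisely because $B\notin S(H,D)$. Lemma~\ref{lem:open-cone} therefore gives $0\ne v\in\R^m$ with
\[
 v^TBx\ge v^Ty\qquad (x\in\interior H,\ y\in\interior D).
\]

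Two scaling arguments now identify $d$. First fix $x$ and replace $y$ by $ty$ with $t\to\infty$. This forces $v^Ty\le0$ for all $y\in\interior D$. By density ($\overline{\interior D}=D$), we get $v^Ty\le 0$ on $D$, so $d:=-v\in D^*$ and $d\ne0$. Next let $y\to0$ along $\interior D$, using $0\in\overline{\interior D}$. This yields $v^TBx\ge0$, that is, $d^TBx\le0$, for all $x\in\interior H$, and hence for all $x\in H$ by density. Thus $-B^Td\in H^*$, i.e.\ $B^Td\in-H^*$.

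The only delicate point is the limiting step. We must make sure the separating inequality passes to the closures. This follows from continuity together with $\overline{\interior Q}=Q$ from Lemma~\ref{lem:cone-facts}. Note also that the argument never requires $B^Td\ne0$, consistent with the statement. If $B^Td=0$, then $0\in-H^*$ trivially.
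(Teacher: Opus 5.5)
Your proposal is correct and is essentially the paper's proof: you separate the open convex set $B(\interior H)-\interior D$ (the negative of the paper's $\interior D-B(\interior H)$) from the origin via Lemma~\ref{lem:open-cone}, extract $d\in D^*$ and $B^Td\in-H^*$ by scaling and density, and prove the converse by the same contradiction from \eqref{eq:strict-dual-positivity}. The only differences are cosmetic: your sign convention $d=-v$, and sending $y\to\infty$ rather than shrinking $x$ toward zero to get the sign of $d$ on $D$.
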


\begin{proof}
Suppose that $B\notin S(H,D)$. The set
$U=\interior D-B(\interior H)$ is nonempty, open, and convex,
and does not contain zero.
Lemma~\ref{lem:open-cone} gives $0\ne d\in\R^m$ such that
\[
 d^T(c-Bh)\geq0
 \qquad(c\in\interior D,\ h\in\interior H).
\]
Scaling $c$ and $h$ separately towards zero, and then using the density
of the interiors, gives $d\in D^*$ and $B^Td\in-H^*$.

Conversely, suppose such a $d$ exists. If $h\in\interior H$ and
$Bh\in\interior D$, then \eqref{eq:strict-dual-positivity} gives
$0<d^TBh=(B^Td)^Th\leq0$, a contradiction.
\end{proof}

\section{Row sets and row operators}
\label{sec:row-operators}

Let $\R_+=[0,\infty)$. For a proper cone $P\subseteq\R^n$, write
\[
 \mathcal R_P=S(P,\R_+)
 =\{r\in\M{1}{n}:rx>0\text{ for some }x\in\interior P\}.
\]
Density of $\interior P$ in $P$ gives the useful equivalence
\begin{equation}
 r\notin\mathcal R_P
 \quad\Longleftrightarrow\quad
 rx\leq0\quad(x\in P)
 \quad\Longleftrightarrow\quad r^T\in-P^*.
 \label{eq:row-dual}
\end{equation}
Together with Lemma~\ref{lem:alternative}, this gives, for $B\in\M{m}{n}$,
\begin{equation}
 B\in S(H,D)
 \quad\Longleftrightarrow\quad
 d^TB\in\mathcal R_H
 \quad\text{for every }0\ne d\in D^*.
 \label{eq:semipositivity-row-test}
\end{equation}

\begin{lemma}
\label{lem:row-criterion}
For $0\neq u\in C^*$,
\begin{equation}
 \{u^TA:A\in S(K,C)\}=\mathcal R_K.
 \label{eq:source-rows}
\end{equation}
For $Y\in\M{n}{n}$,
\begin{equation}
 \{rY:r\in\mathcal R_K\}\subseteq\mathcal R_H
 \quad\Longleftrightarrow\quad
 Y\in\GL_n(\R)\ \text{ and }\ Y^{-1}K\subseteq H.
 \label{eq:row-criterion}
\end{equation}
\end{lemma}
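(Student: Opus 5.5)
For \eqref{eq:source-rows}, I will prove the two inclusions directly from the definitions and \eqref{eq:strict-dual-positivity}. If $A\in S(K,C)$, pick $x\in\interior K$ with $Ax\in\interior C$. Since $0\ne u\in C^*$, \eqref{eq:strict-dual-positivity} gives $(u^TA)x=u^T(Ax)>0$, so $u^TA\in\mathcal R_K$. Conversely, let $r\in\mathcal R_K$, with $x\in\interior K$ satisfying $rx>0$. Fix $c\in\interior C$; then $u^Tc>0$ by \eqref{eq:strict-dual-positivity}. I then need $A$ with $u^TA=r$ and $Ax=c$. The natural attempt is $A=c\,r/(rx)+W$, where $W$ satisfies $u^TW=0$ and $Wx=0$. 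The first term gives $Ax=c$ but $u^TA=(u^Tc)r/(rx)$, so I will rescale $c$ to $c'=(rx/u^Tc)\,c\in\interior C$. With $A=c'r/(rx)$ we get $u^TA=r$ and $Ax=c'\in\interior C$. Hence $A\in S(K,C)$ and $r=u^TA$.

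For \eqref{eq:row-criterion}, sufficiency is short. Assume $Y\in\GL_n(\R)$ and $Y^{-1}K\subseteq H$. Because $Y^{-1}$ is an invertible linear map carrying $K$ into $H$, it is open and so carries $\interior K$ into $\interior H$. Given $r\in\mathcal R_K$ with $rx>0$ for some $x\in\interior K$, set $h=Y^{-1}x\in\interior H$. Then $(rY)h=rx>0$, so $rY\in\mathcal R_H$.

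For necessity, I will use the dual description \eqref{eq:row-dual}: the inclusion says that $rY\notin\mathcal R_H$ forces $r\notin\mathcal R_K$. In dual terms, $Yh'\in -K^*$-type conditions transfer. Concretely, take any $w\in H^*$ and put $r=-w^TY^{-1}$ once invertibility is known. More directly, the contrapositive states that if $r^T\notin -K^*$ then $Y^Tr^T\notin -H^*$, that is, $Y^T$ maps the complement of $-K^*$ into the complement of $-H^*$.

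Invertibility comes first. Suppose $Yz=0$ for some $z\ne0$, and consider rows $r$ with $rY=0$. These form a nonzero subspace $N$ when $Y$ is singular, because the left kernel is nontrivial. Since $0\notin\mathcal R_H$, we need $N\cap\mathcal R_K=\varnothing$. But $\mathcal R_K$ is the complement of the closed cone $-K^*$, which is pointed. A nonzero subspace $N$ containing $v\ne0$ also contains $-v$, and both cannot lie in $-K^*$. So $N$ meets $\mathcal R_K$, a contradiction, and $Y$ is invertible.

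Next, with $Y$ invertible, the condition reads $\mathcal R_K Y\subseteq \mathcal R_H$. Taking complements and using bijectivity of $r\mapsto rY$, this becomes $-H^*\subseteq -K^*Y$, i.e.\ $H^*\subseteq Y^TK^*$, i.e.\ $Y^{-T}H^*\subseteq K^*$. Dualizing gives $Y^{-1}K\subseteq H$. To check this last step, let $k\in K$ and $w\in H^*$. Then $w^TY^{-1}k=(Y^{-T}w)^Tk\ge0$, so $Y^{-1}k\in H^{**}=H$. The only delicate point is the complement argument, which needs $r\mapsto rY$ to be a bijection, and that is exactly what the invertibility step supplies. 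I expect the main obstacle to be organizing this complement/duality bookkeeping cleanly, together with using the pointedness of $K^*$ from Lemma~\ref{lem:cone-facts} to rule out singular $Y$.
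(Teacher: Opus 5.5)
Your proposal is correct and is essentially the paper's proof. Your witness $A=c'r/(rx)=c\,r/(u^Tc)$ is the paper's rank-one matrix $A=ar$. Your necessity argument matches the paper's dual argument: pointedness of $K^*$ applied to the left kernel of $Y$, then complementation to get $H^*\subseteq Y^TK^*$, then the bipolar theorem.

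The only variation is that you prove sufficiency directly, via $Y^{-1}(\interior K)\subseteq\interior H$, rather than by running the dual chain backwards. Your stray remark about putting $r=-w^TY^{-1}$ is never used and can be deleted.
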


\begin{proof}
If $A\in S(K,C)$, take $x\in\interior K$ with
$Ax\in\interior C$. Then $u^TAx>0$ by
\eqref{eq:strict-dual-positivity}, so $u^TA\in\mathcal R_K$.
For the reverse inclusion, choose $a_0\in\interior C$ and set
\[
 a=\frac{a_0}{u^Ta_0}.
\]
Then $a\in\interior C$ and $u^Ta=1$. Given $r\in\mathcal R_K$,
put $A=ar$. If $x\in\interior K$ satisfies $rx>0$, then
$Ax=(rx)a\in\interior C$, and $u^TA=r$.
This proves \eqref{eq:source-rows}.

By contraposition and \eqref{eq:row-dual}, the row inclusion in
\eqref{eq:row-criterion} is equivalent to
\[
 Y^Tv\in H^* \quad\Longrightarrow\quad v\in K^*
 \qquad(v\in\R^n).
\]
If $v\in\ker Y^T$, this implication gives $\pm v\in K^*$.
Since $K^*$ is pointed, $v=0$, and hence $Y$ is invertible.
For invertible $Y$, the displayed implication is equivalent to
\[
 H^*\subseteq Y^TK^*=(Y^{-1}K)^*,
\]
which, by the bipolar theorem, is equivalent to $Y^{-1}K\subseteq H$.
This proves \eqref{eq:row-criterion}.
\end{proof}

\begin{lemma}
\label{lem:row-rigidity}
Let $W\subseteq\M{m}{n}$ be a linear subspace such that
$W\cap S(K,C)=\varnothing$. Then $\operatorname{codim}W\ge n$,
with equality if and only if
\[
 W=\{A\in\M{m}{n}:u^TA=0\}
\]
for some $0\neq u\in C^*$.
In particular, if a linear map $T:\M{m}{n}\to\M{1}{n}$ satisfies
$T(A)\neq0$ for every $A\in S(K,C)$, then there are
$0\neq u\in C^*$ and $Y\in\GL_n(\R)$ such that
\begin{equation}
 T(A)=u^TAY\quad(A\in\M{m}{n}),\qquad
 \rank T=n,\qquad
 \ker T=\{A:u^TA=0\}.
 \label{eq:row-rigidity}
\end{equation}
\end{lemma}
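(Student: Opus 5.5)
The plan is to apply Lemma~\ref{lem:annihilator} in the space $\M{m}{n}$ with the Frobenius inner product $\langle A,B\rangle=\tr(A^TB)$. The set $S(K,C)$ is open, but it is not convex, so I cannot use it directly as a cone. Instead I will use the rank-one structure. For each fixed $x\in\interior K$, the set
\[
 \Omega_x=\{A:Ax\in\interior C\}
\]
is a nonempty open convex cone contained in $S(K,C)$. Since $W\cap\Omega_x=\varnothing$, Lemma~\ref{lem:open-cone} applied to $\Omega_x-W$ gives $0\ne Z_x$ with $Z_x\in W^\perp$ and $\langle Z_x,A\rangle\ge0$ on $\Omega_x$. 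We have $\Omega_x=\{A: Ax\in\interior C\}$, and $A\mapsto Ax$ is a surjection onto $\R^m$. Hence the dual of the closure of $\Omega_x$ consists of the matrices $vx^T$ with $v\in C^*$. So $Z_x=v_xx^T$ with $0\ne v_x\in C^*$, and $v_xx^T\in W^\perp$ for every $x\in\interior K$.

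The key step is the codimension count. Because $W^\perp$ contains $v_xx^T$ for each $x\in\interior K$, I choose a basis $x_1,\dots,x_n$ of $\R^n$ inside $\interior K$ (Lemma~\ref{lem:cone-facts}). The matrices $v_{x_i}x_i^T$ are then linearly independent, since their right factors $x_i^T$ are independent and each $v_{x_i}\neq0$. Therefore $\dim W^\perp\ge n$, which is $\operatorname{codim}W\ge n$.

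For the equality case, suppose $\dim W^\perp=n$. Then $W^\perp$ is spanned by $v_ix_i^T$, $i=1,\dots,n$. I must show all the $v_x$ can be taken equal to a common $u$. This is the main obstacle. For any $x\in\interior K$, write $x=\sum c_ix_i$. Then $v_xx^T=\sum_i c_i v_x x_i^T$ must lie in $\operatorname{span}\{v_ix_i^T\}$. Comparing coefficients of the independent functionals $x_i^T$ via right multiplication by a dual basis gives $c_iv_x\in\R v_i$ for each $i$. Taking $x$ with all $c_i\ne0$, which holds for generic $x$ in the open set $\interior K$, forces all $v_i$ to be proportional to $v_x$. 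They are therefore proportional to a single $u\in C^*$, and positive multiples are available since all lie in the pointed cone $C^*$. Thus $W^\perp=\{ur:r\in\M{1}{n}\}$, so $W=\{A:u^TA=0\}$. Conversely, this $W$ has codimension $n$. It also misses $S(K,C)$: if $u^TA=0$ and $Ax\in\interior C$, then $u^TAx>0$ by \eqref{eq:strict-dual-positivity}, a contradiction.

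For the consequence, set $W=\ker T$. By hypothesis $\ker T\cap S(K,C)=\varnothing$. Then $\operatorname{codim}\ker T=\rank T\le n$, while the first part gives $\operatorname{codim}\ker T\ge n$. So equality holds and $\ker T=\{A:u^TA=0\}$. Since $T$ vanishes on the kernel of the surjection $A\mapsto u^TA$, it factors as $T(A)=\phi(u^TA)$ for a linear $\phi$ on $\M{1}{n}$. Any such $\phi$ is right multiplication by some $Y$, and $Y$ is invertible because $\rank T=n$.
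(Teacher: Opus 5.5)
Your proof is correct and follows the paper's argument essentially step for step. The steps are the same: produce $v_xx^T\in W^\perp$ with $0\ne v_x\in C^*$ for each $x\in\interior K$, count independent matrices over a basis in $\interior K$, get proportionality from an interior point with all coordinates nonzero (the paper takes $x_*=\sum_i x_i$), and factor $T$ through $A\mapsto u^TA$. The only cosmetic difference is that you obtain $v_xx^T$ by separating $\Omega_x-W$ in the matrix space and then computing the dual of $\overline{\Omega_x}$, whereas the paper applies Lemma~\ref{lem:annihilator} directly to the subspace $Wx\subseteq\R^m$, which avoids that dual-cone computation.
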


\begin{proof}
Use the Frobenius inner product
$\langle B,A\rangle=\tr(B^TA)$ on $\M{m}{n}$, and set $Z=W^\perp$.
For $x\in\interior K$, the subspace
$Wx=\{Ax:A\in W\}$ avoids $\interior C$; otherwise an element of
$W$ would be semipositive. By Lemma~\ref{lem:annihilator}, there is
$0\neq u_x\in C^*$ annihilating $Wx$. Therefore
\[
 \langle u_xx^T,A\rangle=u_x^TAx=0\quad(A\in W),
 \qquad u_xx^T\in Z.
\]

Choose a basis $x_1,\ldots,x_n$ in $\interior K$, and write
$u_i=u_{x_i}$. Let $t_j\in\R^n$ satisfy
$x_i^Tt_j=\delta_{ij}$. If
$\sum_i\alpha_i u_ix_i^T=0$, right multiplication by $t_j$ gives
$\alpha_ju_j=0$, hence $\alpha_j=0$. Thus the matrices $u_ix_i^T$
are linearly independent, so $\operatorname{codim}W=\dim Z\ge n$.

Suppose equality holds. Then these matrices form a basis of $Z$.
Put $x_* =\sum_i x_i\in\interior K$, and choose
$0\neq u_*\in C^*$ with $u_*x_*^T\in Z$. Write
\[
 u_*x_*^T=\sum_i c_i u_ix_i^T.
\]
Right multiplication by $t_j$ yields $u_*=c_ju_j$ for every $j$.
Each $c_j$ is nonzero. With $u=u_*$, we obtain
\begin{equation}
 Z=\{uv^T:v\in\R^n\},\qquad
 W=\{A:u^TA=0\}.
 \label{eq:row-kernel}
\end{equation}
Conversely, every subspace of this form has codimension $n$ and
avoids $S(K,C)$ by \eqref{eq:strict-dual-positivity}.

For the assertion about $T$, take $W=\ker T$. Then
$W\cap S(K,C)=\varnothing$ and
$\operatorname{codim}W=\rank T\le n$. The preceding result gives
$\rank T=n$ and \eqref{eq:row-kernel}.
For each $j$, the $j$th coordinate functional of $T$ vanishes
on $W$, so \eqref{eq:row-kernel} gives $y_j\in\R^n$ such that
\[
 (T(A))_j=\langle uy_j^T,A\rangle=u^TAy_j.
\]
Hence $T(A)=u^TAY$, where $Y=[y_1\ \cdots\ y_n]$.
Since $\rank T=n$, $Y$ is invertible.
This proves \eqref{eq:row-rigidity}.
\end{proof}

\begin{lemma}
\label{lem:row-preserver}
A linear map $T:\M{m}{n}\to\M{1}{n}$ satisfies
$T(S(K,C))\subseteq\mathcal R_H$ if and only if there are
$0\neq u\in C^*$ and $Y\in\GL_n(\R)$ such that
\[
 T(A)=u^TAY\quad(A\in\M{m}{n}),\qquad Y^{-1}K\subseteq H.
\]
\end{lemma}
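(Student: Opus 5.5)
The plan is to prove the two directions separately, using Lemma~\ref{lem:row-rigidity} to fix the form of $T$ and Lemma~\ref{lem:row-criterion} to translate the cone condition.

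\emph{Sufficiency.} Suppose $T(A)=u^TAY$ with $0\ne u\in C^*$, $Y\in\GL_n(\R)$ and $Y^{-1}K\subseteq H$. Let $A\in S(K,C)$. By \eqref{eq:source-rows}, the row $u^TA$ lies in $\mathcal R_K$. By the implication ``$\Leftarrow$'' in \eqref{eq:row-criterion}, right multiplication by $Y$ sends $\mathcal R_K$ into $\mathcal R_H$. Hence $T(A)=(u^TA)Y\in\mathcal R_H$, as required.

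\emph{Necessity.} Suppose $T(S(K,C))\subseteq\mathcal R_H$.
\begin{enumerate}
\item Since $0\notin\mathcal R_H$ (because $0\cdot x=0\not>0$), we have $T(A)\ne0$ for every $A\in S(K,C)$.
\item The ``in particular'' part of Lemma~\ref{lem:row-rigidity} then gives $0\ne u\in C^*$ and $Y\in\GL_n(\R)$ with $T(A)=u^TAY$ for all $A$.
\item By \eqref{eq:source-rows}, every $r\in\mathcal R_K$ has the form $r=u^TA$ for some $A\in S(K,C)$.
\item For such $r$, we get $rY=T(A)\in\mathcal R_H$. So $\{rY:r\in\mathcal R_K\}\subseteq\mathcal R_H$.
\item The implication ``$\Rightarrow$'' in \eqref{eq:row-criterion} now yields $Y^{-1}K\subseteq H$. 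Invertibility of $Y$ is obtained again here, as well as from step~2.
\end{enumerate}

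There is no real obstacle: the rigidity lemma already does the structural work. The only point needing care is step~3, the surjectivity in \eqref{eq:source-rows}. It ensures that the cone condition on $Y$ is tested against all of $\mathcal R_K$, not just a subset, and it holds for any fixed $u\ne0$ in $C^*$.
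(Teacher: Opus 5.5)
Your proposal is correct and follows essentially the paper's proof, which simply cites Lemmas~\ref{lem:row-criterion} and~\ref{lem:row-rigidity}. You make explicit the details the paper leaves implicit, namely $0\notin\mathcal R_H$ and the use of the surjectivity in \eqref{eq:source-rows} to test the cone condition on all of $\mathcal R_K$.
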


\begin{proof}
This follows from Lemmas~\ref{lem:row-criterion} and~\ref{lem:row-rigidity}.
\end{proof}

\section{Algebraic structure of preservers}
\label{sec:coefficient-assembly}

\begin{proposition}
\label{prop:algebraic-dichotomy}
Suppose that the linear map $L:\M{m}{n}\to\M{m}{n}$ satisfies
\[
 L(S(K,C))\subseteq S(H,D).
\]
At least one of the following representations holds:
\begin{enumerate}
 \item $L(A)=XAY$, where $0\neq X\in\M{m}{m}$ and
 $Y\in\GL_n(\R)$;
 \item $L(A)=\Psi(u^TA)$, where $0\neq u\in C^*$ and
 $\Psi:\M{1}{n}\to\M{m}{n}$ is linear.
\end{enumerate}
\end{proposition}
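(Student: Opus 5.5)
The plan is to test $L$ against one dual functional at a time, classify each resulting row map by Lemma~\ref{lem:row-preserver}, and then show that these row maps can only fit together in the two stated ways. For $d\in\R^m$, define the row map $T_d:\M{m}{n}\to\M{1}{n}$ by $T_d(A)=d^TL(A)$. By \eqref{eq:semipositivity-row-test}, if $A\in S(K,C)$ then $L(A)\in S(H,D)$, so $T_d(A)\in\mathcal R_H$ for every $0\ne d\in D^*$. Lemma~\ref{lem:row-preserver} then gives, for each such $d$, some $0\ne u_d\in C^*$ and $Y_d\in\GL_n(\R)$ with
\[
 T_d(A)=u_d^TAY_d\qquad(A\in\M{m}{n}).
\]

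Next I would encode these maps as tensors. Identify a linear map $T:\M{m}{n}\to\M{1}{n}$ with an element of $\R^m\otimes\M{n}{n}$, so that $A\mapsto u^TAY$ corresponds to $u\otimes Y$. The assignment $d\mapsto T_d$ is linear on all of $\R^m$. On the cone $D^*$, which has nonempty interior by Lemma~\ref{lem:cone-facts}, its values are rank-one tensors, meaning they lie in the Segre set. That set is cut out by the $2\times2$ minors of the flattening $m\times n^2$ matrix. These minors, composed with $d\mapsto T_d$, are polynomials in $d$ vanishing on a nonempty open set, so they vanish identically. Hence the image $\mathcal T=\{T_d:d\in\R^m\}$ is a linear subspace consisting entirely of tensors of rank at most one.

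The key step is the standard structure result for such a space. Suppose $\mathcal T$ contains $a\otimes b$ and $a'\otimes b'$ with $a,a'$ independent and $b,b'$ independent. Then their sum has rank two, which is impossible. It follows that either all nonzero elements share a common first factor, so $\mathcal T\subseteq u\otimes\M{n}{n}$, or all share a common second factor, so $\mathcal T\subseteq\R^m\otimes Y$. To see this, fix a nonzero element $a\otimes b$ and check each other element against it, including sums when both factors would vary. I expect this to be the main obstacle to write cleanly: one must handle elements that agree with $a\otimes b$ in one factor but not the other, by passing through a third element. Since $\mathcal T$ contains $T_{d_0}=u_{d_0}\otimes Y_{d_0}$ for some $d_0\in\interior D^*$, the shared factor can be taken to be $u=u_{d_0}\in C^*\setminus\{0\}$ in the first case, or $Y=Y_{d_0}\in\GL_n(\R)$ in the second.

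Finally I would read off $L$ in each case.
\begin{itemize}
\item In the first case, every $T_d(A)$ depends only on $u^TA$. Since the coordinates $d^TL(A)$ determine $L(A)$, we get $L(A)=\Psi(u^TA)$ for a linear map $\Psi$, which is representation~2.
\item In the second case, $T_d(A)=x(d)^TAY$ where $d\mapsto x(d)$ is linear, say $x(d)=X^Td$. Then $d^TL(A)=d^TXAY$ for all $d$, so $L(A)=XAY$.
\end{itemize}
In the second case $X\ne0$, because $T_{d_0}\ne0$; this gives representation~1.
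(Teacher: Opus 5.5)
Your proof is correct, but it reaches the dichotomy by a different route from the paper.

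\textbf{The paper's route.} The paper never leaves the cone. It fixes a basis $d_1,\dots,d_m$ of $\R^m$ inside $\interior D^*$. It then applies Lemma~\ref{lem:row-preserver} to each $T_{d_i}$ and to each pairwise sum $T_{d_i}+T_{d_j}=T_{d_i+d_j}$, which is legitimate because $d_i+d_j\in\interior D^*$. Substituting $A=xy^T$ into $u_i^TAY_i+u_j^TAY_j=u_{ij}^TAY_{ij}$ shows that if two of the $u_i$ are independent, then every $Y_k$ lies on a single line. Finally, $L$ is reassembled through the invertible matrix $R$ whose rows are $d_i^T$.

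\textbf{Your route.} You first extend the rank-one property from $\interior D^*$ to all of $\R^m$, using a polynomial-identity argument on the $2\times2$ minors of the flattening. You then invoke the classical structure of a linear space of matrices of rank at most one: all elements share a common column or all share a common row.

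\textbf{What each buys.} Your approach is more conceptual. It shows that the whole space $\{d^TL(\cdot):d\in\R^m\}$ consists of rank-at-most-one tensors. It also gives the linear dependence $x(d)=X^Td$ directly, without the change of basis $R$. The paper's approach is more elementary: it needs only that $\interior D^*$ is closed under addition and spans $\R^m$, and no Zariski-density or Segre-variety description.

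You could in fact drop your polynomial step. Your rank-two contradiction, including the third-element step, only ever adds two elements of the family. Since $T_d+T_{d'}=T_{d+d'}$ with $d+d'\in\interior D^*$, you can run the structure argument on $\{T_d: d\in\interior D^*\}$ alone. Linearity of $d\mapsto T_d$, together with the fact that $\interior D^*$ spans $\R^m$, then puts every $T_d$ into $u\otimes\M{n}{n}$ or into $\R^m\otimes Y$. That streamlined version is essentially the paper's proof.
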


\begin{proof}
For $0\neq d\in D^*$, define $T_d(A)=d^TL(A)$.
By \eqref{eq:semipositivity-row-test},
$T_d(S(K,C))\subseteq\mathcal R_H$.

Choose a basis $d_1,\ldots,d_m$ of $\R^m$
inside $\interior D^*$, and put $T_i=T_{d_i}$.
Both $T_i$ and $T_i+T_j$ satisfy the hypotheses of
Lemma~\ref{lem:row-preserver}, since $d_i+d_j\in\interior D^*$.
Thus
\begin{equation}
 T_i(A)=u_i^TAY_i,
 \qquad 0\neq u_i\in C^*,\quad Y_i\in\GL_n(\R),
 \label{eq:individual-row-factors}
\end{equation}
and every pairwise sum has a representation of the same kind.

Suppose that the $u_i$ are not all collinear, and choose an
independent pair $u_i,u_j$. Write
\[
 u_i^TAY_i+u_j^TAY_j=u_{ij}^TAY_{ij}.
\]
Substituting $A=xy^T$ and letting $y\in\R^n$ vary gives
\[
 (u_i^Tx)Y_i+(u_j^Tx)Y_j=(u_{ij}^Tx)Y_{ij}
 \qquad(x\in\R^m).
\]
Choosing $x$ so that $(u_i^Tx,u_j^Tx)$ equals $(1,0)$ or $(0,1)$
shows that $Y_i$ and $Y_j$ are nonzero scalar multiples of $Y_{ij}$.
For every $k$, the nonzero vector $u_k$ is independent of at least
one of $u_i,u_j$, so the same argument places $Y_k$ on this line.
Thus either all $u_i$ are collinear or all $Y_i$ are collinear.

Let $R\in\M{m}{m}$ have rows $d_i^T$; then $R$ is invertible and the
$i$th row of $RL(A)$ is $T_i(A)$.
If the $Y_i$ are collinear, write $Y_i=c_iY$, where $Y=Y_1$ is
invertible and $c_i\neq0$. Let $U\in\M{m}{m}$ have rows
$c_iu_i^T$. Then
\[
 RL(A)=UAY,\qquad L(A)=XAY,\qquad X=R^{-1}U.
\]
The nonzero rows of $U$ imply $X\neq0$.

If the $u_i$ are collinear, take $u=u_1$ and write
$u_i=\lambda_i u$. Define
\[
 \Psi(r)=R^{-1}
 \begin{pmatrix}
  \lambda_1 rY_1\\
  \vdots\\
  \lambda_m rY_m
 \end{pmatrix}.
\]
This is linear, and \eqref{eq:individual-row-factors} yields
$L(A)=\Psi(u^TA)$.
\end{proof}

\section{Cone criteria for the two forms}
\label{sec:cone-criteria}

\begin{proposition}
\label{prop:product-criterion}
Let $X_0\in\M{m}{m}$ and $Y_0\in\M{n}{n}$.
The map $A\mapsto X_0AY_0$ sends $S(K,C)$ into
$S(H,D)$ if and only if $Y_0$ is invertible and there is a
unique $\varepsilon\in\{1,-1\}$ such that
\begin{equation}
 F=\varepsilon X_0,\qquad G=\varepsilon Y_0,
 \qquad F(\interior C)\subseteq\interior D,
 \qquad G^{-1}K\subseteq H.
 \label{eq:oriented-product-criterion}
\end{equation}
\end{proposition}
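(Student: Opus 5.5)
The plan is to reduce everything to the row test \eqref{eq:semipositivity-row-test} and the row criterion of Lemma~\ref{lem:row-criterion}.

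\emph{Sufficiency.} Suppose \eqref{eq:oriented-product-criterion} holds. Since $X_0AY_0=FAG$, it is enough to treat $F,G$. Take $A\in S(K,C)$ and $x\in\interior K$ with $Ax\in\interior C$, and put $x'=G^{-1}x$. Then $FAGx'=FAx\in\interior D$. For $x'\in\interior H$ I first claim $G^{-1}(\interior K)\subseteq\interior H$. This holds because $G^{-1}$ is a linear homeomorphism, so $G^{-1}(\interior K)$ is an open subset of $H$ and therefore lies in $\interior H$. Hence $FAG\in S(H,D)$.

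\emph{Necessity.} Assume $A\mapsto X_0AY_0$ preserves semipositivity. For $0\ne d\in D^*$, the map $T_d(A)=d^TX_0AY_0$ sends $S(K,C)$ into $\mathcal R_H$ by \eqref{eq:semipositivity-row-test}. By Lemma~\ref{lem:row-preserver}, $T_d(A)=u_d^TAY_d$ with $0\ne u_d\in C^*$, $Y_d$ invertible and $Y_d^{-1}K\subseteq H$.
\begin{enumerate}
\item Comparing with $A=xy^T$ gives $(d^TX_0x)Y_0=(u_d^Tx)Y_d$ for all $x$. Because $Y_d\ne0$ and $u_d\ne0$, this forces $Y_0\ne0$ and $X_0^Td\ne0$. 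Therefore $Y_0$ is a nonzero multiple of the invertible $Y_d$, so $Y_0$ is invertible.
\item Writing $Y_d=\mu_dY_0$ with $\mu_d\ne0$, we get $X_0^Td=\mu_du_d$.
\item By \eqref{eq:row-criterion}, $\mu_d^{-1}Y_0^{-1}K\subseteq H$. Since $K$ is full-dimensional and $H$ is pointed, $Y_0^{-1}K$ and $-Y_0^{-1}K$ cannot both lie in $H$. Hence $\operatorname{sign}\mu_d=\varepsilon$ is the same for all $d$.
\end{enumerate}
Set $\varepsilon$ to this common sign, $F=\varepsilon X_0$ and $G=\varepsilon Y_0$. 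Then $G^{-1}K\subseteq H$, and $F^Td=|\mu_d|u_d\in C^*\setminus\{0\}$ for every $0\ne d\in D^*$. Uniqueness of $\varepsilon$ follows from the same pointedness argument applied to $\pm Y_0^{-1}K$.

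The remaining step, and the main obstacle, is turning $F^T(D^*\setminus\{0\})\subseteq C^*\setminus\{0\}$ into $F(\interior C)\subseteq\interior D$. I would argue as follows. For $c\in\interior C$ and $0\ne d\in D^*$, \eqref{eq:strict-dual-positivity} gives $d^TFc=(F^Td)^Tc>0$. Now use the characterization that $z\in\interior D$ if and only if $d^Tz>0$ for all $0\ne d\in D^*$. This holds because $D^{**}=D$ together with compactness of a base of $D^*$, or alternatively via Lemma~\ref{lem:annihilator} applied to the line $\R z$ after translating. That gives $Fc\in\interior D$. I would double-check this interior characterization carefully, since it is the only step not directly supplied by earlier lemmas.
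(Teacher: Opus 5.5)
Your proposal is correct and follows the paper's proof essentially step for step. You use the row test with Lemma~\ref{lem:row-preserver} to get $d^TX_0AY_0=u_d^TAY_d$, a rank-one test matrix to force $Y_d=\mu_dY_0$ and $X_0^Td=\mu_du_d$, pointedness of $H$ to fix a common sign, and the dual characterization of $\interior D$. The paper simply cites that characterization, and your compactness-of-a-base-of-$D^*$ argument with $D^{**}=D$ justifies it correctly; the paper also tests with $A=ar$, $u_d^Ta=1$, instead of a general $A=xy^T$, which makes no real difference.
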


\begin{proof}
Suppose that $L(A)=X_0AY_0$ preserves semipositivity.
For every $0\ne d\in D^*$, the row map $T_d(A)=d^TL(A)$ sends
$S(K,C)$ into $\mathcal R_H$. Lemma~\ref{lem:row-preserver} gives
$0\ne u_d\in C^*$ and $Z_d\in\GL_n(\R)$, with
$Z_d^{-1}K\subseteq H$, such that
\[
 d^TX_0AY_0=u_d^TAZ_d\qquad(A\in\M{m}{n}).
\]
Taking $A=ar$ with $u_d^Ta=1$ shows that $Z_d=c_dY_0$ for some
$c_d\ne0$. Thus $Y_0$ is invertible, and the same identity gives
$X_0^Td=c_du_d$.

All $c_d$ have the same sign: otherwise the nonzero cone
$Y_0^{-1}K$ and its negative would both lie in the pointed cone $H$.
Let this sign be $\varepsilon$, and put $F=\varepsilon X_0$ and
$G=\varepsilon Y_0$. Then $G^{-1}K\subseteq H$ and
\[
 F^Td=|c_d|u_d\in C^*\setminus\{0\}
 \qquad(0\ne d\in D^*).
\]
Hence $d^TFa>0$ for every $a\in\interior C$ and $0\ne d\in D^*$,
so $F(\interior C)\subseteq\interior D$ by the dual characterization
of cone interiors. The same pointedness argument shows that
$\varepsilon$ is unique.

Conversely, assume \eqref{eq:oriented-product-criterion} and take
$A\in S(K,C)$. Choose $x\in\interior K$ with
$Ax\in\interior C$. The set $G^{-1}(\interior K)$ is open and
contained in $H$, so $h=G^{-1}x\in\interior H$. We then have
\[
 X_0AY_0h=FAGh=FAx\in\interior D.
\]
This proves preservation.
\end{proof}

\begin{proposition}
\label{prop:degenerate-criterion}
Fix $0\neq u\in C^*$ and a linear map
$\Psi:\M{1}{n}\to\M{m}{n}$, and let
$\mathcal Y(d)=\mathcal Y_\Psi(d)$ be the pencil defined in
\eqref{eq:pencil-definition}.
The map $L(A)=\Psi(u^TA)$ preserves semipositivity if and only if
\begin{equation}
 \mathcal Y(d)\in\GL_n(\R),\qquad \mathcal Y(d)^{-1}K\subseteq H
 \qquad\text{for every }0\neq d\in D^*.
 \label{eq:degenerate-criterion}
\end{equation}
Under these conditions, $\Psi$ is injective and $\rank L=n$.
\end{proposition}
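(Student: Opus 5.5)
The plan is to reduce everything to the row test \eqref{eq:semipositivity-row-test} and the row criteria of Lemma~\ref{lem:row-criterion}. For $0\ne d\in D^*$ and any $A$, the pencil definition \eqref{eq:pencil-definition} gives
\[
 d^TL(A)=d^T\Psi(u^TA)=(u^TA)\,\mathcal Y(d).
\]
So $L(A)\in S(H,D)$ holds exactly when $(u^TA)\mathcal Y(d)\in\mathcal R_H$ for every $0\ne d\in D^*$.

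For sufficiency, assume \eqref{eq:degenerate-criterion} and take $A\in S(K,C)$. By \eqref{eq:source-rows}, $r=u^TA$ lies in $\mathcal R_K$. Fix $0\ne d\in D^*$. Since $\mathcal Y(d)$ is invertible with $\mathcal Y(d)^{-1}K\subseteq H$, the implication ``$\Leftarrow$'' in \eqref{eq:row-criterion} gives $r\mathcal Y(d)\in\mathcal R_H$. As $d$ was arbitrary, \eqref{eq:semipositivity-row-test} yields $L(A)\in S(H,D)$.

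For necessity, assume $L$ preserves semipositivity and fix $0\ne d\in D^*$. By \eqref{eq:source-rows}, every $r\in\mathcal R_K$ has the form $u^TA$ with $A\in S(K,C)$. For such $A$, $L(A)\in S(H,D)$, and \eqref{eq:semipositivity-row-test} gives $r\mathcal Y(d)=d^TL(A)\in\mathcal R_H$. Hence $\{r\mathcal Y(d):r\in\mathcal R_K\}\subseteq\mathcal R_H$. The implication ``$\Rightarrow$'' of \eqref{eq:row-criterion} then gives $\mathcal Y(d)\in\GL_n(\R)$ and $\mathcal Y(d)^{-1}K\subseteq H$, which is \eqref{eq:degenerate-criterion}.

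For the rank assertions, suppose $\Psi(r)=0$. Pick any $0\ne d\in D^*$, which exists because $D^*$ is proper by Lemma~\ref{lem:cone-facts}. Then $r\mathcal Y(d)=d^T\Psi(r)=0$, and invertibility of $\mathcal Y(d)$ forces $r=0$, so $\Psi$ is injective. The map $A\mapsto u^TA$ from $\M{m}{n}$ onto $\M{1}{n}$ is surjective because $u\ne0$: choose $a$ with $u^Ta=1$ and take $A=ar$. Thus $L$ is the composition of a surjection onto an $n$-dimensional space with an injection, and $\rank L=n$.

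No step is a genuine obstacle. The only point needing care is that \eqref{eq:source-rows} is an equality, which lets necessity range over all rows in $\mathcal R_K$ rather than only those arising from some restricted set of matrices.
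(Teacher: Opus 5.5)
Your proposal is correct and follows the paper's proof: you combine the pencil identity $d^TL(A)=(u^TA)\mathcal Y(d)$ with \eqref{eq:semipositivity-row-test}, the equality \eqref{eq:source-rows}, and \eqref{eq:row-criterion}, which the paper writes as a single chain of equivalences where you split it into two directions. Your injectivity and rank argument, using one fixed $0\ne d\in D^*$ and the surjectivity of $A\mapsto u^TA$, is the same as the paper's.
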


\begin{proof}
Using \eqref{eq:semipositivity-row-test}, Lemma~\ref{lem:row-criterion},
and the definition of the pencil, we obtain
\[
\begin{aligned}
 L(S(K,C))\subseteq S(H,D)
 &\iff \Psi(\mathcal R_K)\subseteq S(H,D)\\
 &\iff \{r\mathcal Y(d):r\in\mathcal R_K\}\subseteq\mathcal R_H
       \quad(0\ne d\in D^*)\\
 &\iff \mathcal Y(d)\in\GL_n(\R),\quad
       \mathcal Y(d)^{-1}K\subseteq H
       \quad(0\ne d\in D^*).
\end{aligned}
\]

Under these conditions, choose $0\neq d_0\in D^*$. If $\Psi(r)=0$, then $r\mathcal Y(d_0)=0$;
invertibility of $\mathcal Y(d_0)$ gives $r=0$. Thus $\Psi$ is injective.
Since $P_u:A\mapsto u^TA$ is onto $\M{1}{n}$,
\begin{equation}
 \operatorname{im}L=\operatorname{im}\Psi,
 \qquad \rank L=n.
 \label{eq:degenerate-rank}
\end{equation}
\end{proof}

\section{Classification of semipositivity preservers}
\label{sec:classification-proof}

\begin{proof}[Proof of Theorem~\ref{thm:classification}]
Suppose first that $L$ preserves semipositivity.
Proposition~\ref{prop:algebraic-dichotomy} gives a product representation
or a degenerate representation. In the product case,
Proposition~\ref{prop:product-criterion} supplies the unique simultaneous
sign adjustment, which places $L$ in class \textnormal{(I)}. In the
degenerate case, Proposition~\ref{prop:degenerate-criterion} gives
exactly the pencil condition of class \textnormal{(II)}. Conversely,
these two propositions show that every map in either class preserves
semipositivity.

For a map in class \textnormal{(I)}, right multiplication by $Y$ is a
linear isomorphism of the source space. The image of $A\mapsto XA$
consists of matrices whose $n$ columns independently range over
$\operatorname{im}X$. Hence
\begin{equation}
 \rank L=n\rank X.
 \label{eq:product-rank}
\end{equation}

We next identify the intersection of the two classes. If $L$ belongs
to both classes, part~\textup{(i)} gives $n\rank X=\rank L=n$,
so $\rank X=1$. Conversely, suppose that $L$ belongs to class
\textnormal{(I)}, with factors $X,Y$ as in \eqref{eq:product-class},
and that $\rank X=1$. Fix $0\neq d_0\in D^*$ and put $u=X^Td_0$.
For every $a\in\interior C$, \eqref{eq:strict-dual-positivity}
gives $u^Ta=d_0^TXa>0$, so $0\neq u\in C^*$.
Since $\rank X=1$, we have $X=wu^T$ for some $w\in\R^m$.
Thus $L(A)=\Psi(u^TA)$, where $\Psi(r)=wrY$.
Since $L$ preserves semipositivity,
Proposition~\ref{prop:degenerate-criterion} places $L$ in
class \textnormal{(II)}, proving part~\textup{(ii)}.
It follows that class \textnormal{(II)} consists precisely of the
preserving maps of rank $n$.
\end{proof}

Since the pencil $\mathcal Y_\Psi(d)$ is linear in $d$, one might try
to verify \eqref{eq:full-dual-condition} only on the extreme rays of $D^*$
when $D^*$ is polyhedral. The following example shows that this is
insufficient.

\begin{example}[Testing extreme rays]\label{ex:ray-test}
Take $K=H=C=D=\R_+^2$. Let
\[
 P=\begin{pmatrix}0&1\\1&0\end{pmatrix},\qquad
 \Psi(r)=\begin{pmatrix}r\\rP\end{pmatrix}
 \quad(r\in\M{1}{2}),
\]
and set $L(A)=\Psi(e_1^TA)$, where $e_i$ denotes the $i$th standard
basis vector. For $d=(d_1,d_2)^T$,
\[
 d^T\Psi(r)=d_1r+d_2rP=r(d_1I_2+d_2P),
\]
so the associated pencil is $\mathcal Y(d)=d_1I_2+d_2P$.

On the two extreme rays of $D^*=\R_+^2$, we have
\[
 \mathcal Y(te_1)^{-1}=t^{-1}I_2\ge0,\qquad
 \mathcal Y(te_2)^{-1}=t^{-1}P\ge0\qquad(t>0).
\]
At the interior point $d=e_1+e_2$, however,
\[
 \mathcal Y(e_1+e_2)=I_2+P=\begin{pmatrix}1&1\\1&1\end{pmatrix}
\]
is singular, violating \eqref{eq:full-dual-condition}.
\end{example}

Theorem~\ref{thm:classification} therefore yields the following
characterization of invertible preservers.

\begin{corollary}\label{cor:invertible}
A real linear operator $L$ on $\M{m}{n}$ is invertible and satisfies
\eqref{eq:preservation} if and only if there are $X$ and $Y$ such that
\begin{equation}\label{eq:invertible-form}
 L(A)=XAY\quad\text{for every }A,\qquad
 X\in\GL_m(\R),\quad Y\in\GL_n(\R),
\end{equation}
and
\begin{equation}\label{eq:invertible-cones}
 XC\subseteq D,\qquad Y^{-1}K\subseteq H.
\end{equation}
For any prescribed product representation of such an operator, both
factors are invertible, and a unique simultaneous choice of sign makes
\eqref{eq:invertible-cones} hold.
\end{corollary}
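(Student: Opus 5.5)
The plan is to deduce the corollary directly from Theorem~\ref{thm:classification} and Proposition~\ref{prop:product-criterion}. We separate the two directions and then treat the sign statement.

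\emph{Forward direction.} Suppose $L$ is invertible and satisfies \eqref{eq:preservation}. Then $\rank L=mn$. Since $m\ge2$, we have $mn>n$, so $L$ cannot lie in class~(II), where $\rank L=n$. Hence $L$ lies in class~(I): $L(A)=XAY$ with $Y\in\GL_n(\R)$, $X(\interior C)\subseteq\interior D$ and $Y^{-1}K\subseteq H$. Part~(i) gives $mn=\rank L=n\rank X$, so $\rank X=m$ and $X$ is invertible. Taking closures in $X(\interior C)\subseteq\interior D$, and using continuity of $X$ together with $\overline{\interior C}=C$ from Lemma~\ref{lem:cone-facts}, gives $XC\subseteq\overline{\interior D}=D$. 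This yields \eqref{eq:invertible-form} and \eqref{eq:invertible-cones}.

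\emph{Converse.} Assume \eqref{eq:invertible-form} and \eqref{eq:invertible-cones}. Then $L$ is invertible, with inverse $A\mapsto X^{-1}AY^{-1}$. For preservation it suffices, by Proposition~\ref{prop:product-criterion} with $\varepsilon=1$, to show $X(\interior C)\subseteq\interior D$. Since $X$ is invertible, it is an open map, so $X(\interior C)$ is an open subset of $XC\subseteq D$. An open subset of $D$ lies in $\interior D$. Proposition~\ref{prop:product-criterion} then gives preservation.

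\emph{Sign statement.} Take any product representation $L(A)=X_0AY_0$ of such an operator. Proposition~\ref{prop:product-criterion} gives that $Y_0$ is invertible and that there is a unique $\varepsilon\in\{\pm1\}$ with $\varepsilon X_0(\interior C)\subseteq\interior D$ and $(\varepsilon Y_0)^{-1}K\subseteq H$. The rank formula $mn=\rank L=n\rank X_0$ forces $X_0$ to be invertible. The closure argument above converts $\varepsilon X_0(\interior C)\subseteq\interior D$ into $\varepsilon X_0C\subseteq D$, so this $\varepsilon$ makes \eqref{eq:invertible-cones} hold.

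For uniqueness, suppose both signs satisfied \eqref{eq:invertible-cones}. Then $Y_0^{-1}K$ and $-Y_0^{-1}K$ would both lie in $H$. Pointedness of $H$ then forces the nonzero cone $Y_0^{-1}K$ to be $\{0\}$, which is a contradiction. The only slightly delicate point in the whole argument is passing between the interior condition and the closed-cone condition, which relies on the invertibility (openness) of $X$. Everything else is a direct application of the rank counts in Theorem~\ref{thm:classification}.
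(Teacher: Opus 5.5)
Your proposal is correct and matches the paper's proof essentially step for step. You use the rank count $mn>n$ to rule out class~(II) and to force $X$ to be invertible, pass between the interior and closed-cone inclusions by closure and the openness of invertible $X$, and handle the prescribed representation via Proposition~\ref{prop:product-criterion} and the rank formula; the only cosmetic difference is that you prove uniqueness of the sign directly from pointedness of $H$ rather than transferring it from the interior condition, which is equally valid.
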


\begin{proof}
If $L$ is an invertible preserver, then $\rank L=mn>n$.
Theorem~\ref{thm:classification} gives the product form,
and part~\textup{(i)} gives $mn=n\rank X$. Thus $X$ is invertible.
The inclusion $XC\subseteq D$ follows from
$X(\interior C)\subseteq\interior D$ by continuity and closedness of $D$.

Conversely, suppose that \eqref{eq:invertible-form} and
\eqref{eq:invertible-cones} hold. Since $X$ is invertible,
$X(\interior C)$ is open and contained in $D$, and hence lies in
$\interior D$. Theorem~\ref{thm:classification} therefore gives
preservation, and the inverse of $L$ is $B\mapsto X^{-1}BY^{-1}$.
The assertion about a prescribed representation follows from
Proposition~\ref{prop:product-criterion}, \eqref{eq:product-rank},
and the equivalence of the closed-cone and interior inclusions for
invertible $X$.
\end{proof}

\section{The orthant case and preservers from \texorpdfstring{$\M{2}{2}$ to $\M{1}{3}$}{M(2,2) to M(1,3)}}\label{sec:examples}

\begin{corollary}\label{cor:orthants}
Let $m,n\ge2$. For the standard nonnegative orthants,
a linear map $L:\M{m}{n}\to\M{m}{n}$ preserves semipositivity if
and only if it admits at least one of the following representations:
\begin{enumerate}[label=\textup{(\roman*)}]
\item $L(A)=XAY$, where $X\ge0$ has no zero row,
$Y\in\GL_n(\R)$, and $Y^{-1}\ge0$;
\item $L(A)=\Psi(u^TA)$, where $0\ne u\ge0$, $\Psi$ is linear,
and its pencil $\mathcal Y(d)=\mathcal Y_\Psi(d)$ satisfies
\[
 \mathcal Y(d)\in\GL_n(\R),\qquad \mathcal Y(d)^{-1}\ge0
 \qquad(0\ne d\in\R_+^m).
\]
\end{enumerate}
The invertible preservers are exactly the maps in \textup{(i)} with
$X\in\GL_m(\R)$. For a prescribed product representation of a preserving map, the conditions
in \textup{(i)} hold after a unique simultaneous choice of sign.
\end{corollary}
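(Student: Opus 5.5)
This is a direct specialization of Theorem~\ref{thm:classification} and Corollary~\ref{cor:invertible} to $K=H=\R_+^n$ and $C=D=\R_+^m$. The plan is to translate each cone condition into an entrywise condition. The needed facts are that the orthant is self-dual, $(\R_+^q)^*=\R_+^q$, and that its interior is the set of entrywise positive vectors.

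For class (II) the translation is immediate. The condition $0\ne u\in C^*$ becomes $0\ne u\ge0$. For invertible $\mathcal Y(d)$, the inclusion $\mathcal Y(d)^{-1}\R_+^n\subseteq\R_+^n$ holds exactly when every column $\mathcal Y(d)^{-1}e_j$ is nonnegative, that is, when $\mathcal Y(d)^{-1}\ge0$. Ranging over $0\ne d\in D^*=\R_+^m$ gives (ii).

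For class (I) the same reasoning shows that $Y^{-1}K\subseteq H$ is equivalent to $Y^{-1}\ge0$. The one step needing care is showing that $X(\interior\R_+^m)\subseteq\interior\R_+^m$ is equivalent to $X\ge0$ with no zero row.

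Suppose first that $X\ge0$ has no zero row. For $a>0$, each row sum $\sum_j x_{ij}a_j$ is positive, because some $x_{ij}>0$ and all terms are nonnegative.

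Conversely, suppose $Xa>0$ for all $a>0$. Set $a=e_j+\epsilon\mathbf 1$ and let $\epsilon\to0^+$. Then $Xe_j\ge0$ for each $j$, so $X\ge0$. Taking $a=\mathbf 1$ shows that each row sum is positive, so no row is zero.

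Combined with Theorem~\ref{thm:classification}, this gives the stated equivalence.

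For the invertible preservers, Corollary~\ref{cor:invertible} says they are exactly the maps $XAY$ with $X\in\GL_m(\R)$, $XC\subseteq D$ and $Y^{-1}K\subseteq H$. Here $X\R_+^m\subseteq\R_+^m$ means $X\ge0$. An invertible matrix has no zero row, so these are precisely the maps in (i) with $X$ invertible. In the other direction, any map in (i) with $X$ invertible is an invertible operator, with inverse $B\mapsto X^{-1}BY^{-1}$.

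The final claim, about a prescribed product representation $L(A)=X_0AY_0$, is Proposition~\ref{prop:product-criterion} read through the translation above. The hypothesis forces $Y_0$ to be invertible. It also supplies a unique $\varepsilon\in\{1,-1\}$ such that $\varepsilon X_0$ maps positive vectors to positive vectors and $(\varepsilon Y_0)^{-1}\ge0$. By the equivalence just proved, $\varepsilon X_0\ge0$ with no zero row.

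No step is a genuine obstacle. The only points needing a little care are the limiting argument that recovers $X\ge0$ from the interior inclusion, and remembering that the sign uniqueness comes from Proposition~\ref{prop:product-criterion} rather than being re-derived here.
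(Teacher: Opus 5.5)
Your proposal is correct and follows the paper's proof. Like the paper, you specialize Theorem~\ref{thm:classification}, Corollary~\ref{cor:invertible}, and Proposition~\ref{prop:product-criterion}, using self-duality of the orthant and the entrywise translations of the cone conditions on $X$, $Y^{-1}$, and $\mathcal Y(d)^{-1}$. The only cosmetic difference is how you get $X\ge0$: you let $\epsilon\to0^+$ in $X(e_j+\epsilon\mathbf 1)>0$, while the paper makes one input coordinate large. Both arguments work.
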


\begin{proof}
The nonnegative orthant is its own dual. A matrix $X$ maps every
strictly positive vector to a strictly positive vector if and only if
$X\ge0$ and $X$ has no zero row. Indeed, a negative entry in row $i$ and
column $j$ would make the $i$th output coordinate negative when the
$j$th input coordinate is sufficiently large and all other input
coordinates equal one. A zero row is also excluded. Conversely, every nonzero
nonnegative row has positive inner product with every strictly
positive column. For an invertible matrix $Y$, the condition
$Y^{-1}\R_+^n\subseteq\R_+^n$ is equivalent to nonnegativity of
each column $Y^{-1}e_j$, hence to $Y^{-1}\ge0$.
The classification follows from Theorem~\ref{thm:classification};
the invertibility and sign assertions follow from
Corollary~\ref{cor:invertible} and Proposition~\ref{prop:product-criterion}.\qedhere
\end{proof}

\medskip
\noindent\textbf{Preservers from $\M{2}{2}$ to $\M{1}{3}$.}
Preservers between matrix spaces of different dimensions can have
considerably more intricate forms, as the following classification illustrates.
We now consider linear maps
$T:\M{2}{2}\to\M{1}{3}$ preserving orthant semipositivity.
For this discussion, write
\[
 \mathcal S_{a,b}=S(\R_+^b,\R_+^a),\qquad
 J_0=\begin{pmatrix}0&1\\-1&0\end{pmatrix}.
\]

\begin{theorem}\label{thm:additional-column}
A real linear map $T:\M{2}{2}\to\M{1}{3}$ satisfies
$T(\mathcal S_{2,2})\subseteq\mathcal S_{1,3}$ if and only if it
belongs to exactly one of the following three classes.
\begin{enumerate}[label=\textup{(\Roman*)},ref=\Roman*,beginpenalty=10000]
\item There are $0\ne u\in\R_+^2$ and
$Y\in\M{2}{3}$ such that
\[
 T(A)=(u^TA)Y,\qquad
 YN=I_2\quad\text{for some }N\in\M{3}{2},\ N\ge0.
\]
Equivalently, the condition on $Y$ is
$\R_+^2\subseteq Y\R_+^3$. In this case
$\ker T=\{A:u^TA=0\}$.

\item There are $0\ne u\in\R_+^2$, $0\ne w\in\R^2$, and
$G\in\GL_3(\R)$ whose first two columns are entrywise nonnegative,
such that
\[
 T(A)=F_{u,w}(A)G^{-1},\qquad
 F_{u,w}(A)=\bigl(u^TA,\ (J_0u)^TAJ_0w\bigr).
\]
In this case
$\ker T=\R\bigl((J_0u)w^T\bigr)$.

\item There are
\[
 P=\begin{pmatrix}\alpha&\beta\\\gamma&\delta\end{pmatrix}\ge0,
 \quad\det P\ne0,\quad G\in\GL_3(\R),
\]
such that $T(A)=Q_P(A)G^{-1}$. For
$A=\begin{pmatrix}a&b\\c&d\end{pmatrix}$, the map $Q_P$ is defined by
\[
 Q_P(A)=\bigl(\alpha a+\gamma c,
 \beta a+\alpha b+\delta c+\gamma d,
 \beta b+\delta d\bigr).
\]
The rows $(\ell_i,m_i,n_i)$ of $G$ must satisfy
\begin{equation}\label{eq:binary-quadratic-condition}
 \ell_i\ge0,\qquad n_i\ge0,\qquad
 m_i\ge-2\sqrt{\ell_i n_i}\qquad(i=1,2,3).
\end{equation}
In this case $\ker T=\R(J_0P)$.
\end{enumerate}
Class \textup{(I)} consists of the preservers of rank two,
while classes \textup{(II)} and \textup{(III)} consist of those of
rank three whose kernels are spanned by singular and invertible
matrices, respectively.
\end{theorem}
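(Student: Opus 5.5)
The plan is to split by $\rank T$ and to recast preservation as a dual, pointwise-in-$x$ criterion. The target class is simple: a row $r\in\M{1}{3}$ lies in $\mathcal S_{1,3}$ iff it has a positive entry, i.e.\ $r\notin-\R_+^3$.

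\textbf{Rank bounds and rank two.} The kernel of a preserver avoids $\mathcal S_{2,2}$. By Lemma~\ref{lem:row-rigidity} (with $m=n=2$, $K=C=\R_+^2$) it has codimension at least $2$, so $\rank T\in\{2,3\}$. If $\rank T=2$, the equality case of Lemma~\ref{lem:row-rigidity} gives $\ker T=\{A:u^TA=0\}$ with $0\ne u\ge0$. The same coordinate-functional argument then yields $T(A)=(u^TA)Y$ with $Y\in\M{2}{3}$.

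By \eqref{eq:source-rows}, preservation then means $rY\not\le0$ whenever $r\not\le0$. Equivalently, $Y^Tv\ge0$ must force $v\ge0$, that is, $(Y\R_+^3)^*\subseteq\R_+^2$. By the bipolar theorem this is $\R_+^2\subseteq Y\R_+^3$. Solving $Ye_j'=e_j$ with nonnegative vectors gives the matrix $N\ge0$ with $YN=I_2$. This is class (I).

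\textbf{Dual criterion in rank three.} Assume $\rank T=3$, so $T$ is onto and $\ker T=\R K_0$. Write $\mathcal S_{2,2}=\bigcup_{x>0}S_x$ with $S_x=\{A:Ax>0\}$. Each $T(S_x)$ is an open convex cone, and $T$ preserves iff every $T(S_x)$ avoids $-\R_+^3$.

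By Lemma~\ref{lem:open-cone} applied to $T(S_x)+\R_+^3$, this holds iff there is $0\ne g\ge0$ with $T(A)g\ge0$ on $S_x$. The dual cone of $S_x$ under the Frobenius pairing is $\{ux^T:u\ge0\}$. So the criterion becomes:
\[
 \text{for each }x>0\text{ there are }0\ne u\ge0,\ 0\ne g\ge0\text{ with }T(A)g=u^TAx\text{ for all }A.
\]
The functionals $A\mapsto T(A)g$ form exactly $(\R K_0)^\perp$. Hence we need $u^TK_0x=0$, and then $g$ is uniquely determined by $ux^T$, linearly, and must be $\ge0$.

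The first requirement says that $K_0x$ is never strictly positive or strictly negative for $x>0$. That is, $\pm K_0\notin\mathcal S_{2,2}$.

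\textbf{Singular kernel.} Suppose $K_0=vw^T$. Then $K_0x$ is a multiple of $v$, and $\pm K_0\notin\mathcal S_{2,2}$ forces $v$ to have a zero entry or entries of mixed sign. Hence $v=J_0u$ for some $0\ne u\ge0$, up to scale. For $x$ with $w^Tx\ne0$ the admissible $u$ is forced to be this one.

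Choose coordinates $F_{u,w}$ vanishing exactly on $\R(J_0u)w^T$ and write $T=F_{u,w}G^{-1}$. The condition $g\ge0$ along $x>0$, together with the free choices of $u$ on the line $w^Tx=0$, should translate into nonnegativity of the first two columns of $G$. This is class (II).

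\textbf{Invertible kernel.} Suppose $K_0$ is invertible. Then $K_0(\interior\R_+^2)$ is an open sector that must miss the open first and third quadrants. This forces $K_0=J_0P$ (up to sign) with $P\ge0$ and $\det P\ne0$.

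Skewness of $J_0$ shows that $u=Px\ge0$ satisfies $u^TK_0x=0$, and this $u$ is unique up to scale. A direct computation gives, for $x=(s,t)^T$,
\[
 (Px)^TAx=Q_P(A)\,(s^2,st,t^2)^T .
\]
Writing $T=Q_PG^{-1}$, the identity $T(A)g=(Px)^TAx$ gives $g=G(s^2,st,t^2)^T$. So the condition $g\ge0$ for all $s,t>0$ is exactly copositivity of the three binary forms $\ell_is^2+m_ist+n_it^2$. This is \eqref{eq:binary-quadratic-condition}, and it gives class (III).

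\textbf{Converses, disjointness, and the main obstacle.} The converse directions follow by reversing each step of the criterion. Disjointness follows from the rank and from whether the kernel generator is singular or invertible.

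I expect the main obstacle to be the singular-kernel case. On the line $w^Tx=0$, the vector $u$ is not determined by the orthogonality condition, so turning the pointwise criterion into the clean column-sign condition on $G$ needs careful bookkeeping of the sign of $w^Tx$ and of boundary cases, such as $u$ having a zero entry.
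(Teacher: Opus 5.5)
Your rank bound and rank-two case follow the paper's argument. In rank three you take a genuinely different, dual route.

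The paper works on the primal side. For a singular kernel it computes the exact image $F_{u,w}(\mathcal S_{2,2})=\mathcal S_{1,2}\times\R$ by lifting each row to a semipositive matrix: it chooses $x>0$ off $w^\perp$ and corrects along the kernel. For an invertible kernel it proves the fiber identity $\exists t:(A+tJ_0P)x>0\iff(Px)^TAx>0$ and applies it to preimages of $-e_i^T$.

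You instead separate $T(S_x)+\R_+^3$ from $0$ and use $S_x^*=\{ux^T:u\ge0\}$. This gives one certificate criterion valid for every rank-three $T$, and it is correct. Openness of $T(S_x)$ comes from surjectivity. The converse needs no openness, because $T(A)g=u^TAx>0$ whenever $Ax>0$. The criterion produces $\pm K_0\notin\mathcal S_{2,2}$ and both cone conditions on $G$ uniformly. It also explains them: the certificate is a positive multiple of $G\binom{x}{0}$ in class (II) and of $G(s^2,st,t^2)^T$ in class (III). The paper's sufficiency proof in class (III) uses exactly this certificate, so the two routes differ only in necessity. What the paper's route buys is that it avoids separation in rank three and yields the exact image of $\mathcal S_{2,2}$ under $F_{u,w}$ as a byproduct.

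The step you left open in class (II) is not actually an obstacle.
\begin{itemize}
\item Necessity: for $x>0$ with $w^Tx\ne0$, the admissible direction is $cu$ with $c>0$. Since $u^TAx=F_{u,w}(A)\binom{x}{0}$ and $F_{u,w}$ is onto, the certificate is $g=cG\binom{x}{0}$. Hence $x_1Ge_1+x_2Ge_2\ge0$ on the dense set $\{x>0:w^Tx\ne0\}$, and continuity gives nonnegativity of the first two columns of $G$.
\item Sufficiency: $u$ itself is admissible for every $x>0$, because $u^TK_0x=(w^Tx)\,u^TJ_0u=0$. Then $g=G\binom{x}{0}$ is nonnegative and nonzero.
\end{itemize}
So nothing on the line $w^Tx=0$ needs separate treatment.

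You still have to write out some routine checks:
\begin{itemize}
\item $\ker F_{u,w}=\R\bigl((J_0u)w^T\bigr)$ and $\ker Q_P=\R(J_0P)$ are one-dimensional. For the latter, $Q_P(A)=0$ iff $P^TA$ is skew, and $P^TJ_0P=(\det P)J_0$. You need these to write $T=F_{u,w}G^{-1}$ and $T=Q_PG^{-1}$, and for the rank-three claim in the converse.
\item The elementary equivalence between copositivity of $\ell s^2+mst+nt^2$ on $s,t>0$ and \eqref{eq:binary-quadratic-condition}.
\end{itemize}
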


\begin{proof}
Set $W=\ker T$. If $T$ preserves semipositivity, then
$W\cap\mathcal S_{2,2}=\varnothing$. Lemma~\ref{lem:row-rigidity}
gives $\operatorname{codim}W\ge2$, so $\rank T$ is either $2$ or $3$.
If $\rank T=2$, the same lemma yields $W=\{A:u^TA=0\}$
for some $0\ne u\ge0$. Hence $T(A)=(u^TA)Y$ for some
$Y\in\M{2}{3}$. By \eqref{eq:source-rows}, preservation is equivalent to
\[
 rY\le0\ \Longrightarrow\ r\le0.
\]
Separation of the finitely generated closed cone $Y\R_+^3$ shows
that this is equivalent to $\R_+^2\subseteq Y\R_+^3$.
Taking nonnegative preimages of $e_1,e_2$ gives $N\ge0$ with
$YN=I_2$. Conversely, this identity implies the displayed implication.
It also forces $\rank Y=2$. This completes case~\textnormal{(I)}.

Now let $\rank T=3$ and write $W=\R B$ with $B\ne0$.
If $B=vz^T$ has rank one, neither $B$ nor $-B$ is semipositive,
so $v_1v_2\le0$. Consequently
$B=(J_0u)w^T$ for some $0\ne u\ge0$ and $w\ne0$.
Direct calculation gives
\[
 \ker F_{u,w}=\R\bigl((J_0u)w^T\bigr).
\]
Indeed, $u^TA=0$ implies $A=(J_0u)\xi^T$, and the third
coordinate of $F_{u,w}(A)$ is $\|u\|^2\xi^TJ_0w$.
Thus $F_{u,w}$ and $T$ are surjective with the same kernel,
and $F_{u,w}(A)=T(A)G$ for some $G\in\GL_3(\R)$.

We use the exact image identity
\begin{equation}\label{eq:singular-kernel-image}
 F_{u,w}(\mathcal S_{2,2})=\mathcal S_{1,2}\times\R.
\end{equation}
Here $\mathcal S_{1,2}$ consists of all $1\times 2$ rows with at least one positive entry.
One inclusion follows from \eqref{eq:source-rows}. For the other,
choose $A_0$ with $F_{u,w}(A_0)=(r,s)$, where $r\in\mathcal S_{1,2}$.
Choose $x>0$ with $rx>0$ and $w^Tx\ne0$; the excluded hyperplane
cannot contain the nonempty open set defined by $x>0$ and $rx>0$.
Let $\mathbf1_q$ denote the all-ones column in $\R^q$; we omit the
subscript when the dimension is clear. The positive vector
$z=(rx)\mathbf1_2/(u_1+u_2)$ satisfies
$u^Tz=u^TA_0x$. Therefore $z-A_0x\in\R J_0u$, so for a suitable
$t\in\R$ the matrix $A=A_0+t(J_0u)w^T$ satisfies $Ax=z$ and
$F_{u,w}(A)=F_{u,w}(A_0)$.
This proves \eqref{eq:singular-kernel-image}.
By this identity, the map $A\mapsto F_{u,w}(A)G^{-1}$ preserves semipositivity
if and only if, for every row $t\in\M{1}{3}$ with $t\le0$, the first
two coordinates of $tG$ are nonpositive. Testing $t=-e_i^T$ shows
that this is equivalent to nonnegativity of the first two columns
of $G$.
This completes case~\textnormal{(II)}.

If $B$ is invertible, $B(\interior\R_+^2)$ is open and connected, and
contains no vector whose coordinates are both positive or both negative.
It cannot meet either coordinate axis, since it is open.
It therefore lies in one of the two opposite-sign open quadrants.
After replacing $B$ by $-B$ if necessary, the first row of $B$ is
nonnegative and its second row is nonpositive. Thus $B=J_0P$
for an invertible $P\ge0$.
With $v(x)=(x_1^2,x_1x_2,x_2^2)^T$, one has
\begin{equation}\label{eq:quadratic-witness}
 Q_P(A)v(x)=(Px)^TAx.
\end{equation}
The equation $Q_P(A)=0$ says that $P^TA$ is skew-symmetric,
so $\ker Q_P=\R(J_0P)$. Hence $Q_P(A)=T(A)G$ for some
$G\in\GL_3(\R)$.

For $x>0$, put $y=Ax$ and $z=Px>0$. Since
$(A+tJ_0P)x=(y_1+tz_2,y_2-tz_1)^T$, we have
\begin{equation}\label{eq:invertible-kernel-fiber}
 \exists t\in\R:\ (A+tJ_0P)x>0
 \quad\Longleftrightarrow\quad (Px)^TAx>0.
\end{equation}
Choose $A_i$ with $T(A_i)=-e_i^T$. No matrix in
$A_i+\R J_0P$ is semipositive. Equations
\eqref{eq:quadratic-witness}--\eqref{eq:invertible-kernel-fiber}
therefore give $e_i^TGv(x)\ge0$ for every $x>0$.
Conversely, suppose that $Gv(x)\ge0$ for every $x>0$. If
$A\in\mathcal S_{2,2}$ and $x>0$ satisfies $Ax>0$, then
\[
 T(A)Gv(x)=(Px)^TAx>0,
\]
so $T(A)$ has a positive entry. Preservation is consequently
equivalent to nonnegativity of each quadratic form
$\ell_i s^2+m_i st+n_i t^2$ for $s,t>0$.
This is precisely \eqref{eq:binary-quadratic-condition}:
limits at the axes give $\ell_i,n_i\ge0$, and minimizing over
$s/t>0$ gives the remaining inequality, with $m_i\ge0$ if
$\ell_i n_i=0$. Sufficiency follows from
\[
 \ell s^2+mst+nt^2
 = (\sqrt\ell\,s-\sqrt n\,t)^2
   +(m+2\sqrt{\ell n})st.
\]
This completes case~\textnormal{(III)}. Finally, the rank
and the matrix rank of a nonzero kernel generator distinguish
the three cases, which exhaust all possibilities.
\end{proof}

\begin{remark}
\label{rem:additional-column}
For $T:\M{2}{2}\to\M{1}{3}$, every rectangular product
$T(A)=XAY$, with $X\in\M{1}{2}$ and $Y\in\M{2}{3}$, has
rank at most two. Thus both rank-three classes in
Theorem~\ref{thm:additional-column} lie outside the product form.
They also differ in whether the third column can be discarded
after a fixed linear recombination of output columns.
More precisely, call $T$ \emph{compressible to two columns} if
there is $R\in\M{3}{2}$ such that $A\mapsto T(A)R$ preserves
semipositivity. Class~\textnormal{(I)} is compressible with $R=N$,
and class~\textnormal{(II)} with $R=G\binom{I_2}{0_{1\times2}}$;
both choices are nonnegative and give $T(A)R=u^TA$.
Class~\textnormal{(III)} admits no such real $R$. Otherwise
Lemma~\ref{lem:row-rigidity} would give
$\ker(A\mapsto T(A)R)=\{A:u^TA=0\}$ for some $0\ne u\ge0$,
forcing $u^TJ_0P=0$, contrary to invertibility of $J_0P$.
\end{remark}

\section{Proof of the minimal semipositivity classification}
\label{sec:minimal-semipositivity}

The key step is to prove that $\det L(A)=c\det A$ for some $c\ne0$.
We use the following polynomial lemma.

\begin{lemma}\label{lem:msp-polynomial}
Let $n\ge2$, and let $F$ be a real polynomial of total degree at most $n$
in the entries of $A\in\M{n}{n}$. Suppose that $F(A)=0$ whenever
$\rank A=n-1$ and
\[
 Au=0,\qquad v^TA=0\qquad\text{for some }u,v>0.
\]
Then there is $c\in\R$ such that $F(A)=c\det A$ for every
$A\in\M{n}{n}$.
\end{lemma}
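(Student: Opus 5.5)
The plan is to show first that $F$ vanishes on the whole determinantal hypersurface $\{\det A=0\}$, and then to divide by $\det$. Let
\[
 \mathcal V=\{A:\rank A=n-1,\ Au=0,\ v^TA=0\text{ for some }u,v>0\}.
\]
We show that the Zariski closure of $\mathcal V$ is the irreducible hypersurface $\{\det A=0\}$, via the parametrization
\[
 A=P\begin{pmatrix}B&0\\0&0\end{pmatrix}Q .
\]
A cleaner local description works as follows. The set $\mathcal M$ of rank-$(n-1)$ matrices is a smooth, connected real-algebraic manifold of dimension $n^2-1$; it is connected because $\GL_{n-1}$ has two components but the sign can be absorbed in the outer factors. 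The maps $A\mapsto\ker A$ and $A\mapsto\ker A^T$ are continuous (indeed smooth) from $\mathcal M$ to projective space. Hence $\mathcal V$ is a nonempty open subset of $\mathcal M$: a small perturbation of a matrix with positive kernel vectors $u,v$ keeps kernel vectors near $u$ and $v$, so they remain positive. Nonemptiness is witnessed by $A=I-\tfrac1n\mathbf 1\mathbf 1^T$, which has rank $n-1$ and $A\mathbf 1=0$, $\mathbf 1^TA=0$.

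Next we argue that $F$ vanishes on all of $\mathcal M$. Since $\mathcal M$ is a connected analytic manifold and $F|_{\mathcal M}$ is real-analytic and vanishes on a nonempty open subset, $F\equiv0$ on $\mathcal M$. As $\mathcal M$ is dense in $\{\det A=0\}$ (every singular matrix is a limit of rank-$(n-1)$ matrices), continuity gives $F=0$ on the whole real hypersurface $\{\det=0\}$. Connectedness of $\mathcal M$ is the one point needing care. It follows from the description of $\mathcal M$ as the image of the connected set $\{(x,y,B)\}$ via $A=\sum_{i<n}x_iy_i^T$ with $(x_1,\dots,x_{n-1})$ ranging over linearly independent $(n-1)$-frames in $\R^n$. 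For $n\ge2$ this frame space is connected, since $n-1<n$ leaves room to rotate.

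Finally we divide. Because $\det$ is an irreducible polynomial over $\R$ whose real zero set has real points where its gradient (the cofactor matrix) is nonzero, the real Nullstellensatz, or the elementary argument below, shows that $\det$ divides $F$. For the elementary argument, write $F$ as a polynomial in $a_{11}$ with coefficients polynomial in the other entries, and use that $\det=a_{11}M_{11}+R$ is of degree one in $a_{11}$. Dividing gives $F=q\det+r$, where $r$ does not involve $a_{11}$, after multiplying by a power of $M_{11}$. Setting $a_{11}=-R/M_{11}$ on the open set $M_{11}\ne0$ then forces $M_{11}^kr\equiv0$, hence $r=0$. Since $\det$ is irreducible and coprime to $M_{11}$, we conclude $\det\mid F$. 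The degree bound $\deg F\le n=\deg\det$ makes the quotient a constant $c$.

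The main obstacle I expect is the propagation step from the open set $\mathcal V$ to all of $\mathcal M$, namely the connectedness and analyticity of $\mathcal M$. The fallback is to avoid manifold language: use the explicit polynomial parametrization $(P,Q)\mapsto P\,\mathrm{diag}(I_{n-1},0)\,Q$ on $\GL_n\times\GL_n$. Its pullback of $F$ is a polynomial vanishing on a nonempty open set, hence identically zero.
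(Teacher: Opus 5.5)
Your argument is correct, but it is organized differently from the paper's. You separate geometry from algebra. First you spread the vanishing of $F$ from $\mathcal V$ to the whole manifold $\mathcal M$ of rank-$(n-1)$ matrices, either through openness of $\mathcal V$ in $\mathcal M$ plus connectedness and the real-analytic identity theorem, or through the pullback along $(P,Q)\mapsto P\operatorname{diag}(I_{n-1},0)Q$. You then extend to $\{\det A=0\}$ by density, and only then divide by $\det$.

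The paper never leaves a single chart. It uses the Schur-complement parametrization $A(B,p,q)$, whose $(n,n)$ entry is $q^TB^{-1}p=r/s$ and whose free parameters range over a nonempty open set $U$ of an affine space. The cleared-denominator polynomial $R=\sum_j a_jr^js^{n-j}$ therefore vanishes on $U$, hence identically. The identity $s^n\widehat F(d)-R=\sum_{j\ge1}a_js^{n-j}\bigl((sd)^j-r^j\bigr)$ and primality of the generic determinant then finish the proof.

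Your division step is the same device: pseudo-division in one entry whose coefficient in $\det$ is a cofactor ($a_{11}$ with $M_{11}$ for you, the $(n,n)$ entry with $s=\det B$ in the paper). So the real difference is your propagation step, and it is dispensable. Every $A\in\mathcal V$ has $\operatorname{adj}A=\alpha uv^T$ with $\alpha\ne0$ and $u,v>0$, hence $M_{11}(A)\ne0$. The off-$(1,1)$ entries whose singular completion lies in $\mathcal V$ therefore form a nonempty open set, and your remainder already vanishes there.

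What your route buys is the conceptual statement that $\mathcal V$ is Zariski dense in the determinantal hypersurface, so $F$ vanishes on all singular matrices. What it costs is the manifold structure, connectedness, and analytic continuation, none of which the paper needs. If you keep the propagation step, your fallback via $(P,Q)$ is preferable to the connectedness argument. A polynomial on $\M{n}{n}\times\M{n}{n}$ that vanishes on a nonempty open set is identically zero, with no appeal to the topology of $\mathcal M$.
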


\begin{proof}
Consider the parameter set
\[
 U=\left\{(B,p,q):
 \begin{array}{l}
 B\in\GL_{n-1}(\R),\quad p,q\in\R^{n-1},\\
 -B^{-1}p>0,\quad -B^{-T}q>0
 \end{array}\right\}.
\]
It is open and contains $(I_{n-1},-\mathbf1,-\mathbf1)$.
For $(B,p,q)\in U$, the matrix
\[
 A(B,p,q)=
 \begin{pmatrix}
 B&p\\ q^T&q^TB^{-1}p
 \end{pmatrix}
\]
has rank $n-1$ and positive right and left null vectors
\[
 \binom{-B^{-1}p}{1},
 \qquad
 \binom{-B^{-T}q}{1}.
\]
Hence $F(A(B,p,q))=0$.

Regard $d$ and the entries of $B,p,q$ as independent variables, and set
\[
 s=\det B,\qquad r=q^T\operatorname{adj}(B)p,\qquad
 \widehat F(d)=F\begin{pmatrix}B&p\\q^T&d\end{pmatrix}
             =\sum_{j=0}^{n}a_jd^j,
\]
where each $a_j$ is a polynomial in the entries of $B,p,q$.
The polynomial
\[
 R=\sum_{j=0}^{n}a_jr^js^{n-j}
\]
vanishes on $U$, since $R=s^n\widehat F(r/s)=s^nF(A(B,p,q))$ there.
Thus $R$ is identically zero. Put
\[
 f=sd-r=\det\begin{pmatrix}B&p\\q^T&d\end{pmatrix}.
\]
The identity
\[
 s^n\widehat F(d)-R
 =\sum_{j=1}^{n}a_js^{n-j}\bigl((sd)^j-r^j\bigr)
\]
shows that $f$ divides $s^n\widehat F(d)$, because every term on the
right is divisible by $sd-r$.
The generic determinant $f$ is irreducible and hence prime in the real
polynomial ring in $d$ and the entries of $B,p,q$.
Since $\deg f=n>\deg s=n-1$, it does not divide $s$ and therefore
must divide $\widehat F(d)$.
The total degree bound on $F$ now gives $F(A)=c\det A$ for some $c\in\R$.
\end{proof}

\begin{proof}[Proof of Theorem~\ref{thm:msp-classification}]
Suppose that $L(\mathcal I_n)\subseteq\mathcal I_n$, and set
$F(A)=\det L(A)$. This is a nonzero homogeneous polynomial of degree $n$,
since $I\in\mathcal I_n$ implies $F(I)\ne0$.

Let $A$ have rank $n-1$ and suppose that
\[
 Au=0,\qquad v^TA=0,\qquad u,v>0.
\]
Then $\operatorname{adj}(A)=\alpha uv^T$ for some $\alpha\ne0$.
Put $E=e_ne_n^T$. Expansion in the $(n,n)$-entry gives
$\det(A+tE)=t\alpha u_nv_n$. Hence
\begin{equation}\label{eq:msp-perturbation}
 t(A+tE)^{-1}
 =\frac{\operatorname{adj}(A+tE)}{\alpha u_nv_n}
 \longrightarrow\frac{uv^T}{u_nv_n}>0
 \qquad(t\downarrow0).
\end{equation}
Applying the same argument to $-A$, we obtain
$A+tE,-A+tE\in\mathcal I_n$ for all sufficiently small $t>0$.
If $L(A)$ were invertible, continuity of inversion as $t\downarrow0$
would give
\[
 L(A)^{-1}\ge0,\qquad -L(A)^{-1}\ge0,
\]
because $L(A+tE),L(-A+tE)\in\mathcal I_n$.
This is impossible. Hence $F(A)=0$.
Applying Lemma~\ref{lem:msp-polynomial}, we obtain
\begin{equation}\label{eq:msp-determinant}
 \det L(A)=c\det A\quad(A\in\M{n}{n}),
 \qquad c=\det L(I)\ne0.
\end{equation}

It follows that $L(A)$ is invertible if and only if $A$ is invertible.
The classical structure theorem for linear maps preserving
nonsingularity in both directions
\cite[Theorem~1(ii)]{deseguinspazzis2010}
therefore gives
\[
 L(A)=XAY\quad\text{for every }A,
 \qquad\text{or}\qquad
 L(A)=XA^TY\quad\text{for every }A,
\]
with $X,Y\in\GL_n(\R)$. The factor criteria in
\cite[Theorems~2.10 and~2.11]{choudhurykannansivakumar2018}
imply that $X^{-1},Y^{-1}$ are either both nonnegative or both
nonpositive. Replacing $X,Y$ simultaneously by $-X,-Y$ in the latter
case proves necessity.

Conversely, if $A^{-1}\ge0$ and $X^{-1},Y^{-1}\ge0$, then
\[
 (XAY)^{-1}=Y^{-1}A^{-1}X^{-1}\ge0,\qquad
 (XA^TY)^{-1}=Y^{-1}(A^{-1})^TX^{-1}\ge0.
\]
Both types of maps send $\mathcal I_n$ into itself and are invertible.
\end{proof}

\bigskip
{\small
\noindent\textbf{Yulong Tian}\par
\noindent\textsc{Department of Mathematics, Northeast Forestry University,
Harbin, P.~R.~China}\par
\noindent\textit{Email address:}
\href{mailto:tylmath@nefu.edu.cn}{\texttt{tylmath@nefu.edu.cn}}\par
\medskip
\noindent\textbf{Jinli Xu}\par
\noindent\textsc{Department of Mathematics, Northeast Forestry University,
Harbin, P.~R.~China}\par
\noindent\textit{Email address:}
\href{mailto:jclixv@qq.com}{\texttt{jclixv@qq.com}}\par
}

\end{document}